\theoremstyle{plain}
\newtheorem{thm}{Theorem}
\newtheorem{cor}{Corollary}
\newtheorem{pro}{Proposition}
\newtheorem{lem}{Lemma}
\newtheorem{ass}{Assumption}
\theoremstyle{remark}
\newtheorem{rem}{Remark}
\newtheorem{ex}{Example}
\theoremstyle{definition}
\newcommand{\bbR}{\mathbb{R}}
\newcommand{\bbN}{\mathbb{N}}
\newcommand{\de}{\mathrm d}
\newcommand{\weak}\rightharpoonup
\newcommand{\Id}{\operatorname{Id}}
\newcommand{\inv}{^{-1}}
\newcommand{\qms}{Q_m^\sigma}
\newcommand{\qm}{Q_m}
\newcommand{\spsig}[2]{\left\langle #1,#2\right\rangle_\sigma}
\def\cprime{$'$}
\newcommand{\lr}[1]{\left(#1\right)}
\newcommand{\abs}[1]{\left\vert #1 \right\vert}
\newcommand{\norm}[2]{\left\Vert #1 \right\Vert _{#2}}
\newcommand{\set}[1]{\left\{#1\right\}}
\newcommand{\scalar}[2]{\langle #1,#2\rangle}
\newcommand{\yd}{y^\delta}
\newcommand{\xad}{x^\delta_\alpha}
\newcommand{\xnull}{x_0}
\newcommand{\res}{\operatorname{Res}}
\newcommand{\co}{c}
\title[Discretized Lavrent\cprime ev regularization]{Discretized
  Lavrent\cprime ev
  regularization for the autoconvolution equation} 
\author{Steven B\"urger}
\address{Technische Universität Chemnitz, Department of Mathematics, 
D-09107 Chemnitz, Germany}
\email{steven.buerger@mathematik.tu-chemnitz.de}
\author{Peter Math\'e}
\address{Weierstra{\ss} Institute for Applied Analysis and
  Stochastics, Mohrenstra\ss e 39, 10117 Berlin,  Germany}
\email{peter.mathe@wias-berlin.de}
\date{\today}
\begin{document}
\begin{abstract}
Lavrent\cprime ev regularization for the autoconvolution equation was
considered by J. Janno in {\it Lavrent\cprime ev regularization of ill-posed problems
              containing nonlinear near-to-monotone operators with
              application to autoconvolution equation}, Inverse
            Problems, 16(2):333--348, 2000. 
Here this study is extended by considering discretization of the
Lavrent\cprime ev scheme by splines. It is shown how to maintain the known
convergence rate by an appropriate choice of spline spaces and a
proper choice of the discretization level. For piece-wise constant
splines the discretized equation allows for an explicit solver, in
contrast to using higher order splines. This is used to design a fast
implementation by means of post-smoothing, which
provides results, which are indistinguishable from results obtained by
direct discretization using cubic splines. 
\end{abstract}

\maketitle
{\noindent\small Key words: autoconvolution, inverse problem, Lavrent\cprime ev regularization\\[1ex]
MSC: 47J06, 47A52, 65F22
}
\section{Introduction, problem formulation}
\label{sec:intro}

We shall study the numerical solution of the following autoconvolution
operator, say~$F:L_2(0,1)\to L_2(0,1)$, given as
\begin{equation}
  \label{eq:base-auto}
  [F(x)](s):=\int_{0}^{s} x(s-t)x(t)\de t,\quad s \in [0,1], 
\end{equation}
for real-valued functions. The solution to such equations attracted attention both from
applications, cf.~\cite{Schleicher198333,0266-5611-32-3-035002}, and from the mathematical
side, see e.g.~\cite{MR1269013,MR1402101,MR3306115},
and~\cite{MR3353600}, where different approaches are taken.

Instead of the exact equation~$F(\xnull) =
y_{0}$ we are given noisy data~$\yd$ given as
\begin{equation}
\label{eq:base-noisy}
 \yd = F(\xnull) + \delta\xi
\end{equation}
for given noisy right hand side.
The assumption on the noise will be specified, below.

To this end we use Lavrent\cprime ev regularization, and hence we choose some
a priori guess, say $x_{\ast}$, and a parameter~$\alpha>0$ such that
we look for some $x = \xad$ with
\begin{equation}
  \label{eq:lavrentev}
  \alpha(x_{\ast} - x) + \yd = F(x).
\end{equation}
Extending the analysis from~\cite{MR1766765} we want to use finite
dimensional approximations to solve Eq.~(\ref{eq:base-noisy}).  
To this end we consider linear approximations~$Q = Q_{m}$ with certain
approximation properties, to be specified later. Having chosen $Q_{m}$
we replace Eq.~(\ref{eq:lavrentev}) by
\begin{equation}
  \label{eq:Qbase}
  \alpha Q_{m}(x_{\ast} - x) + Q_{m}\yd = Q_{m} F(Q_{m}x).
\end{equation}
\begin{rem}
  This is a two-sided discretization, since we use a finite amount of
  data~$Q_{m}\yd$, and we also aim at representing the solution in the
  range of $Q_{m}$.
\end{rem}

The outline for the material is as follows. In
Section~\ref{sec:error-analysis} we provide the error
analysis for~(\ref{eq:Qbase}), extending Janno's original ideas, where
no discretization was involved. These results are then extended in
Section~\ref{sec:extension}. We also derive an explicit implementation
of the scheme~(\ref{eq:Qbase}). Finally we provide some numerical study, where we
discuss the control of the involved parameters, in particular the
choice of~$\alpha$, and the discretization level~$m$, both as
functions of the noise level~$\delta$, which is assumed to be known.
Most of the proofs are given in the appendix.

\section{Error analysis of the discretized regularization problem} 
\label{sec:error-analysis}

The goal is to formulate the main results. To do so we first introduce
the basic assumptions, then we derive some properties of the
autoconvolution operator. Special attention is paid on the
approximation of the value~$\xnull(0)$, since this will prove
important. The main results and some consequences are then given
in~\S~\ref{sec:main-result}.

\subsection{Assumptions}
\label{sec:assumptions}

The error analysis will be based on several assumptions, especially on
the noise, the (unknown) solution~$x$ in~(\ref{eq:base-noisy}), and
the discretization schemes, to be considered.

We shall consider two different assumptions on the noise.
\begin{ass}
  [noise]\label{ass:noise}
Suppose that we are given a noise level~$\delta>0$.
  \begin{enumerate}
  \item\label{ass:1} The element~$\xi$ obeys
\begin{math}\label{noise1}
 \norm{\xi}{\infty} \le \delta.
\end{math}
\item\label{ass:1prime}The element~$\xi$ obeys
\begin{math}\label{noise2}
 \norm{\xi}{2} \le \delta.
\end{math}
  \end{enumerate}
\end{ass}
The main result, Theorem~\ref{thm:main} is concerned with the first
case, Assumption~\ref{ass:noise}\eqref{ass:1}, 
and we shall briefly discuss the more relaxed
assumption~\ref{ass:noise}\eqref{ass:1prime} in its Corollary~\ref{cor:noise1prime}.

We turn to the solution smoothness.
For $x\in L_\infty(0,1)$ we will denote the essential supremum norm of
$x$ as $\|x\|_\infty$.    
\begin{ass}
  [solution smoothness]\label{ass:onx}
The unknown solution element~$\xnull$ is positive, it belongs to
$W^{2}_{\infty}(0,1)$, and 
$\norm{\xnull}{C^{1}(0,1)}\leq K$, where we equip the
space~$C^{1}(0,1)$ with norm~$\norm{x}{C^{1}(0,1)}:=
\max\set{\norm{x}{\infty},\norm{x^{\prime}}{\infty}}$. In particular
we assume that~$\xnull(0) >0$.
\end{ass}
 \begin{rem}\label{rem:data-nonneg}
  Since we assumed that the unknown solution is positive and the autoconvolution of a positive function is obviously nonnegative, we know that the exact data $y_0 := F(x_0)$ are nonnegative.
  For this reason we will assume that our noisy data are nonnegative as well, otherwise we would define new data $\tilde y^\delta$ by
  \begin{equation*}
   \tilde y^\delta(s) = \begin{cases}y^\delta(s) & \text{ if } y^\delta(s) \ge 0\\ 0 & \text{ else}\end{cases}
  \end{equation*}
  without increasing the noise level.
 \end{rem}

Following the study~\cite{MR1766765}, having fixed some $\sigma\geq 0$ (to be
specified later)  we equip $L_2(0,1)$ with the
inner product
\[\langle x,y \rangle_\sigma = \int_0^1 e^{-2\sigma t} x(t) y(t) \de
t, \]
and the corresponding (weighted) norm
\[
\|x\|_\sigma^2=\int_{0}^{1} e^{-2\sigma t} x(t)^2 \de t.
\]
Although the norms~$\norm{\cdot}{}$ and $\norm{\cdot}{\sigma}$ are
equivalent, i.e.,\ for fixed $\sigma>0$ we have that
  \begin{equation}\label{norm:equiv}
  e^{-\sigma}\|x\|_0 \le \|x\|_\sigma \le \|x\|_0,\quad x\in L_{2}(0,1), 
 \end{equation}
we shall occasionally denote the weighted Hilbert space by
$L_{2}^{\sigma}(0,1)$.
There is a natural isometry~$D_{\sigma}\colon L_{2}(0,1)\to
L_{2}^{\sigma}(0,1)$, given through the function~$f_{\sigma}(t) =
e^{\sigma t},\ t\in[0,1]$
as
\begin{equation}
  \label{eq:dsigma}
D_{\sigma}x = f_{\sigma }x,\quad x\in L_{2}(0,1). 
\end{equation}

Similarly, if $x\in L_{2}(0,1)$ is such that it is absolutely
continuous, and the weak derivatives are in~$L_{2}(0,1)$, then we
shall denote the weighted Sobolev Hilbert space of such elements by
$H^{\sigma}_{1}(0,1)$, equipped with norm
$$
\norm{x}{1,\sigma}:=\frac 1 2
\lr{\norm{x}{\sigma}^{2}+ \norm{x^{\prime}}{\sigma}^{2}}^{1/2}.
$$
Furthermore we introduce the operator norm $\norm . {\sigma \to \sigma}$ as
\begin{align*}
 \norm A {\sigma \to \sigma} := \norm{A\colon L_{2}^{\sigma}(0,1)\to
  L_{2}^{\sigma}(0,1)}{} = \sup_{x\in L_2^\sigma(0,1)}
  \frac{\norm{Ax}\sigma}{\norm x\sigma}. 
\end{align*}
for linear Operators $A:L_2^\sigma(0,1) \to L_2^\sigma(0,1)$.
\begin{rem}
Note that $\|.\|_\sigma$ reduces to  the standard $L_2$-norm for
$\sigma=~0$. 
Also, due to the above equivalence, the spaces~$H^{\sigma}_{1}(0,1)$
are equivalent to the usual Sobolev Hilbert spaces~$H^{1}(0,1)$.
Finally, we mention that functions~$x\in C^{1}(0,1)$ belong
to~$H^{\sigma}_{1}(0,1)$ and $\norm{x}{H_{1}^{\sigma}(0,1)}\leq
\norm{x}{C^{1}(0,1)}$.
\end{rem}

We turn to describing the approximation scheme, captured by
the approximation operators~$Q_{m},\ m=1,2,\dots$
\begin{ass}
  [approximation power]\label{ass:onQ}
We assume that we are given a sequence~$X_{m}\subset L_{2}(0,1),\ m\in\bbN$, of
finite dimensional subspaces, $\dim(X_{m})=m$ with orthogonal (in
$L_{2}^{\sigma}(0,1)$) projections~$\qms$ onto the spaces~$X_{m}$.  
There is a constant~$L<\infty$ such that for all $x\in H^{\sigma}_{1}(0,1)$
we have
\begin{equation}
\label{it:best-appr}
\inf\set{\norm{x - z}{\sigma},\quad z\in X_{m}} \leq \frac L
m\norm{x}{H^{\sigma}_{1}(0,1)},\quad  m\in\bbN.
\end{equation}
\end{ass}

We highlight the relations and approximation properties between the
unweighted ($\sigma=0$) and weighted ($\sigma>0$) spaces.
Specifically, any family of projection operators
$(Q_m)_{m\in\bbN}$, which fulfills assumption \ref{ass:onQ} for $\sigma=0$
can be used to construct projection operators $\qms$ for $\sigma>0$,
which also satisfy assumption \ref{ass:onQ}, with simple proof to be
found in the appendix. 
\begin{lem}\label{lem:02sigma}
  Let $X_{m}\subset L_{2}(0,1),\ m\in\bbN$ a sequence of
 finite dimensional subspaces with orthogonal (in
 $L_{2}(0,1)$) projections~$\qm$ onto the spaces~$X_{m}$,
 satisfying Assumption~\ref{ass:onQ} with a constant $L<\infty$.
 Moreover let $f_\sigma$ and $D_\sigma$ as in~(\ref{eq:dsigma}).
 Then for $\sigma>0$ the projection operators $\qms$ defined by
 \begin{equation}
  \qms := D_\sigma \qm D_\sigma^{\inv}
 \end{equation}
 are orthogonal projection operators w.r.t. $\langle.,.\rangle_\sigma$, for which
 Assumption~\ref{ass:onQ} holds with constant
 $\sqrt 2 \lr{1 + \sigma}L$, onto the spaces~$D_{\sigma}X_{m}$.
\end{lem}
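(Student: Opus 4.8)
The plan is to exploit that $D_\sigma$ is not merely a norm isometry but a \emph{unitary} map between the inner product spaces $(L_2(0,1),\scalar{\cdot}{\cdot})$ and $(L_2^\sigma(0,1),\spsig{\cdot}{\cdot})$. A one-line computation gives $\spsig{D_\sigma u}{D_\sigma v}=\scalar{u}{v}$ for all $u,v\in L_2(0,1)$, so that $D_\sigma^{\inv}$ is exactly the $\sigma$-adjoint of $D_\sigma$. From this the algebraic assertions follow at once: squaring $\qms=D_\sigma\qm D_\sigma^{\inv}$ and using $\qm^2=\qm$ shows $(\qms)^2=\qms$; the range is $D_\sigma(\range\qm)=D_\sigma X_m$; and the $\spsig{\cdot}{\cdot}$-self-adjointness
\[
\spsig{\qms x}{y}=\scalar{\qm D_\sigma^{\inv}x}{D_\sigma^{\inv}y}=\scalar{D_\sigma^{\inv}x}{\qm D_\sigma^{\inv}y}=\spsig{x}{\qms y}
\]
is inherited from the $L_2$-self-adjointness of $\qm$. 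Together these say that $\qms$ is the $\spsig{\cdot}{\cdot}$-orthogonal projection onto $D_\sigma X_m$, which is the first claim.

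For the approximation estimate I would transport the best-approximation problem back to the unweighted setting. Given $x\in H^\sigma_1(0,1)$, set $u:=D_\sigma^{\inv}x$, i.e.\ $u(t)=e^{-\sigma t}x(t)$. Since every element of $D_\sigma X_m$ is of the form $D_\sigma w$ with $w\in X_m$, the isometry yields
\[
\inf\set{\norm{x-z}{\sigma}\colon z\in D_\sigma X_m}=\inf\set{\norm{D_\sigma u-D_\sigma w}{\sigma}\colon w\in X_m}=\inf\set{\norm{u-w}{0}\colon w\in X_m},
\]
so it suffices to apply the assumed bound for $\qm$ (Assumption~\ref{ass:onQ} with $\sigma=0$) to $u$ and then re-express $\norm{u}{H^0_1(0,1)}$ through $x$.

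The only real work, and the place where the constant is produced, is the comparison of Sobolev norms. Since $x$ is absolutely continuous with $x'\in L_2$, so is $u$, with $u'(t)=e^{-\sigma t}(x'(t)-\sigma x(t))$; hence $\norm{u}{0}=\norm{x}{\sigma}$ and $\norm{u'}{0}=\norm{x'-\sigma x}{\sigma}$. Writing $a:=\norm{x}{\sigma}$ and $b:=\norm{x'}{\sigma}$ and using the triangle inequality $\norm{x'-\sigma x}{\sigma}\le b+\sigma a$ together with the elementary estimate $a^2+(b+\sigma a)^2\le (2+\sigma^2)(a^2+b^2)\le 2(1+\sigma)^2(a^2+b^2)$, one obtains $\norm{u}{H^0_1(0,1)}\le\sqrt2\,(1+\sigma)\norm{x}{H^\sigma_1(0,1)}$, where the factors $1/2$ in both norms cancel. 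Chaining this with the displayed identity and Assumption~\ref{ass:onQ} gives the bound with constant $\sqrt2\,(1+\sigma)L$. I expect no genuine obstacle here beyond bookkeeping; the single point to verify is the scalar inequality $\sqrt{2+\sigma^2}\le\sqrt2\,(1+\sigma)$, which after squaring reduces to $0\le 4\sigma+\sigma^2$ and thus holds for every $\sigma\ge0$.
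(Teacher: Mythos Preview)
Your proof is correct and follows essentially the same route as the paper: both establish that $\qms$ is an orthogonal projection via the unitarity of $D_\sigma$, then transport the best-approximation problem for $x$ in $D_\sigma X_m$ back to the unweighted problem for $u=D_\sigma^{-1}x=f_\sigma^{-1}x$ in $X_m$, and finally bound $\norm{u}{H^0_1}$ by $\norm{x}{H^\sigma_1}$ using the product rule $u'=e^{-\sigma t}(x'-\sigma x)$. The only cosmetic differences are that the paper argues orthogonality via the norm-one property rather than self-adjointness, and bounds $\norm{u'}{0}^2$ by $2\sigma^2\norm{x}{\sigma}^2+2\norm{x'}{\sigma}^2$ (yielding $\max\{2,1+2\sigma^2\}$) instead of your slightly sharper $(2+\sigma^2)(a^2+b^2)$; both constants are then majorized by $2(1+\sigma)^2$ in the same way.
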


We provide the following illustrative examples.
\begin{ex}[piece-wise constant splines]\label{ex:fsigma-constant}
We let~$\tilde X_{m}$ be the spline spaces~$\mathcal S^{m}_{1}$ of
piece-wise constant functions with 
respect to the equi-distant partition~$\Delta_{i}= [(i-1)/m,i/m),\ i=1,\dots,m$.
 The spaces~$X_{m}$ are then given as $X_{m}= D_{\sigma} (\tilde
 X_{m})$.

The approximation power for elements
in~$H^{1}(0,1)$  by piece-wise constant functions is known (cf.~\cite[Thm.~6.1, eq. (6.7)]{MR2348176}
with p=q=2) as
\begin{equation}
 \inf\set{\norm{x - z},\quad z\in X_{m}} \leq \frac L
 {m}\norm{x}{H_1(0,1)},\quad  m\in\bbN,  
\end{equation}
  
and hence the spaces~$\tilde X_{m}$
obey Assumption~\ref{ass:onQ} for $\sigma=0$ with constant~$L=1$. By virtue of
Lemma~\ref{lem:02sigma} this extends to the spaces~$X_{m}$. 
We mention that this approximation maps nonnegative functions to
nonnegative piece-wise constant functions.
\end{ex}

\begin{ex}
  [cubic splines]
As above we consider the equidistant partition, and we let~$\mathcal
S_{4}^{m}$ be the space of cubic splines, with corresponding
orthogonal (in~$L_{2}(0,1)$) projection~$Q_{m}$. 
Then Assumption~\ref{ass:onQ} holds, and  we refer to the
comprehensive monograph~\cite[Cor.~6.26]{MR2348176}
(with~$m=4,\sigma=1,p=q=2,r=0$) for~$\sigma=0$.
Therefore, the validity of Assumption~\ref{ass:onQ}
extends to $\sigma>0$. 

Actually, for functions which fulfill
Assumption~\ref{ass:onx} we even have the stronger assertion
$$
\inf\set{\norm{x - z}{\infty},\quad z\in X_{m}} \leq \frac L
{m^{2}}\norm{x}{W^{2}_{\infty}(0,1)},\quad  m\in\bbN, 
$$
see e.g.~\cite[Cor.~6.26]{MR2348176} (with~$m=4,\sigma=2,p=q=\infty,r=0$). However, we cannot
benefit from this additional approximation power in the overall
performance of the proposed Lavrent\cprime ev regularization.
\end{ex}

\subsection{Properties of the autoconvolution operator}
\label{sec:props}

In the subsequent analysis we will relate the nonlinear
autoconvolution equation~(\ref{eq:base-auto}) to the following linear
Volterra equation. 
We note that for $F$ from~(\ref{eq:base-auto}) its
Fr\`echet-derivative~$F^{\prime}(x)\colon L_{2}(0,1) \to L_{2}(0,1)$, at element~$x$, 
is given by
\begin{equation}
  \label{eq:frechet}
  [F^{\prime}(x)h](s) = 2\int_0^s x(s-t) h(t) \de t,\quad s \in [0,1],\ h\in L_{2}(0,1).
\end{equation}

Under the assumptions made on the solution~$\xnull$ the equation
\begin{equation}
  \label{eq:volterra}
F'(\xnull)\omega = \xnull - \xnull(0),
\end{equation}
has a solution, and we refer to~\cite[Lemma~4]{MR1766765}.

We bound the norm of the Fr\`echet derivative in the weighted Hilbert
spaces~$L_{2}^{\sigma}(0,1)$. 
\begin{lem}\label{lem:frechet} 
For the Fr\`echet-derivative~$F^{\prime}(u)$ of $F$ at $u\in L_2^{\sigma}(0,1)$
the estimate \[
\|F'(u)\colon L_{2}^{\sigma}(0,1)\to  L_{2}^{\sigma}(0,1)\|\le 2\|u\|_{\sigma},\quad u\in L_{2}^{\sigma}(0,1),
\] 
holds. Consequently, we also have that
\begin{equation}\label{F}
 \|F(x)\|_{\sigma}\le \|x\|_{\sigma}^2,\quad x\in L_{2}^{\sigma}(0,1).
 \end{equation}
\end{lem}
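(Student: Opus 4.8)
The plan is to exploit the multiplicative factorization of the exponential weight across the convolution, which reduces the weighted estimate to a classical $L_{2}$ convolution bound. First I would write the weighted norm of $F'(u)h$ explicitly and pull the weight $e^{-\sigma s}$ inside the convolution integral. Since $s = (s-t)+t$, the weight splits as $e^{-\sigma s} = e^{-\sigma(s-t)}e^{-\sigma t}$, so that, setting $\hat u(r) := e^{-\sigma r}u(r)$ and $\hat h(t):= e^{-\sigma t}h(t)$, one obtains
\[
e^{-\sigma s}[F'(u)h](s) = 2\int_{0}^{s}\hat u(s-t)\,\hat h(t)\de t = 2(\hat u \ast \hat h)(s).
\]
The point of this rewriting is that $\norm{\hat u}{L_{2}(0,1)}=\norm{u}{\sigma}$ and $\norm{\hat h}{L_{2}(0,1)}=\norm{h}{\sigma}$, while $\norm{F'(u)h}{\sigma} = 2\norm{\hat u \ast \hat h}{L_{2}(0,1)}$, so the weighted estimate becomes an unweighted convolution estimate.

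Next I would bound the $L_{2}$-norm of the convolution. Extending both functions by zero outside $[0,1]$, Young's convolution inequality gives $\norm{\hat u \ast \hat h}{L_{2}} \le \norm{\hat u}{L_{1}}\norm{\hat h}{L_{2}}$, and on the unit interval the Cauchy--Schwarz inequality yields $\norm{\hat u}{L_{1}} \le \norm{\hat u}{L_{2}}$. Combining these,
\[
\norm{F'(u)h}{\sigma} = 2\norm{\hat u \ast \hat h}{L_{2}} \le 2\norm{\hat u}{L_{2}}\norm{\hat h}{L_{2}} = 2\norm{u}{\sigma}\norm{h}{\sigma},
\]
which is exactly the asserted operator-norm bound after dividing by $\norm{h}{\sigma}$ and taking the supremum over $h$.

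For the consequence I would use the homogeneity relation $[F'(x)x](s) = 2\int_{0}^{s}x(s-t)x(t)\de t = 2[F(x)](s)$, that is, $F(x) = \frac{1}{2}F'(x)x$. Feeding this into the operator-norm bound with $u=x$ and $h=x$ gives
\[
\norm{F(x)}{\sigma} = \frac{1}{2}\norm{F'(x)x}{\sigma} \le \frac{1}{2}\cdot 2\norm{x}{\sigma}\cdot\norm{x}{\sigma} = \norm{x}{\sigma}^{2},
\]
as claimed. (Alternatively the same weight-splitting applied directly to $F$ gives $\norm{F(x)}{\sigma} = \norm{\hat x \ast \hat x}{L_{2}} \le \norm{\hat x}{L_{2}}^{2}$, with the same conclusion.)

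The estimate is quite soft; the only genuine step is recognizing that the exponential weight factors through the convolution, reducing everything to the standard Young/Cauchy--Schwarz chain. The main point to be careful about is the direction of the weight splitting: one must check that $e^{-\sigma s}$ distributes \emph{exactly} as $e^{-\sigma(s-t)}e^{-\sigma t}$ over the convolution, leaving no residual exponential. This is what makes the reduction sharp, rather than merely up to the equivalence constant $e^{\sigma}$ from~\eqref{norm:equiv}, and it is the reason the clean factor $2$ survives in the weighted norm.
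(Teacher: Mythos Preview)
Your proof is correct and follows essentially the same route as the paper: both arguments hinge on the factorization $e^{-\sigma s}=e^{-\sigma(s-t)}e^{-\sigma t}$ to reduce to an unweighted estimate, and both derive the consequence via $F(x)=\tfrac{1}{2}F'(x)x$. The only cosmetic difference is that you package the convolution bound as Young's inequality plus $\|\hat u\|_{L_{1}}\le\|\hat u\|_{L_{2}}$, whereas the paper applies Cauchy--Schwarz directly to the inner integral and then enlarges the domain of integration to $[0,1]$; these are the same estimate in different clothing.
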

\begin{proof}
Let $u\in L_2^{\sigma}(0,1)$. Then we have
\begin{align*}
 \|F'(u)v\|_{\sigma}^2 =& \int_0^1 e^{-2\sigma s} \left(2\int_0^s u(s-t) v(t) \de t\right)^2 \de s\\
 =& 4\int_0^1 \left(\int_0^s e^{-\sigma(s-t)} u(s-t) e^{-\sigma t} v(t) \de t\right)^2 \de s\\
 \le& 4\int_0^1 \left(\int_0^s e^{-2\sigma(s-t)} u(s-t)^2\de t\right) \left(\int_0^s e^{-2\sigma t} v(t)^2 \de t\right) \de s\\
 \le& 4\int_0^1 \left(\int_0^1 e^{-2\sigma t} u(t)^2\de t\right) \left(\int_0^1 e^{-2\sigma t} v(t)^2 \de t\right) \de s\\
 =& 4\|u\|_{\sigma}^2 \|v\|_{\sigma}^2.
\end{align*}
The final assertions follows from the observation that $F(x) =
\frac12F^{\prime}(x) x$, which completes the proof.
\end{proof}

The following technical lemma will be used to prove the main result.
\begin{lem}[{cf.~\cite[{Lem.~3 \& 4}]{MR1766765}}]
\label{lem:suumary-sigma}
  Suppose that $\xnull$ obeys Assumption~\ref{ass:onx}. For each~$0 <
  \co<1$ there is
  some $\sigma_{0}\geq 0$ such that for $\sigma \geq \sigma_{0}$ we have that
  \begin{enumerate}
  \item $\scalar{F^{\prime}(\xnull) v}{v}_{\sigma}\geq 0,\quad v\in
    L_{2}(0,1)$, and
\item the solution~$\omega$ to~(\ref{eq:volterra}) obeys~$\norm{\omega}{\sigma}
  < \co$. 
  \end{enumerate}
\end{lem}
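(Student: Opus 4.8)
The plan is to prove the two assertions of Lemma~\ref{lem:suumary-sigma} by exploiting the structure of the weighted inner product, namely that the weight $e^{-2\sigma t}$ damps the ``future'' contributions of the Volterra-type operator $F'(x_0)$ and thereby makes it effectively monotone and small as $\sigma\to\infty$. Let me sketch this.

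For assertion (1), I would write out the quadratic form explicitly. Using $[F'(x_0)v](s)=2\int_0^s x_0(s-t)v(t)\,\de t$ and the weighted inner product, we have
\begin{equation*}
\scalar{F'(x_0)v}{v}_\sigma = 2\int_0^1 e^{-2\sigma s}v(s)\int_0^s x_0(s-t)v(t)\,\de t\,\de s.
\end{equation*}
The idea (following Janno) is to integrate by parts or to symmetrize the kernel, writing the form in terms of the substitution $u(t)=e^{-\sigma t}v(t)$, so that the exponential weight is absorbed into the integration variables and the kernel becomes $e^{-\sigma(s-t)}x_0(s-t)$. The key observation is that the leading-order behaviour as $\sigma\to\infty$ is governed by the value $x_0(0)>0$ (the kernel near the diagonal $s=t$), which is strictly positive by Assumption~\ref{ass:onx}, and produces a dominant positive contribution, while the off-diagonal terms involving $x_0'$ are controlled by $\norm{x_0'}{\infty}\le K$ and are multiplied by factors that vanish as $\sigma\to\infty$. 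Making this precise amounts to showing that for $\sigma$ large enough the positive diagonal term dominates the (possibly negative) remainder, which forces $\sigma_0$ to depend on $x_0(0)$ and $K$.

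For assertion (2), I would use the explicit solution $\omega$ to the Volterra equation~(\ref{eq:volterra}). Since $F'(x_0)$ is a Volterra convolution operator with kernel $2x_0$, its inverse can be written via the resolvent of the kernel, and the right-hand side $x_0-x_0(0)$ vanishes at $s=0$. The plan is to estimate $\norm{\omega}{\sigma}$ directly from a representation of $\omega$, showing that the weight $e^{-2\sigma t}$ makes the norm small: intuitively, $\omega$ is bounded independently of $\sigma$ in the unweighted norm, but the weighting by $e^{-\sigma t}$ together with the structure of the Volterra resolvent yields a bound of order $O(1/\sigma)$ or similar, so that for $\sigma\ge\sigma_0$ we get $\norm{\omega}{\sigma}<c$. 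Alternatively, one differentiates the Volterra equation to obtain a representation for $\omega$ in terms of $x_0'$ and $x_0(0)$, then applies Lemma~\ref{lem:frechet} together with the norm estimate $\norm{F'(x_0)^{-1}}{\sigma\to\sigma}$ in the weighted space.

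The main obstacle I anticipate is assertion (1): establishing the coercivity (nonnegativity) of the weighted quadratic form uniformly in $v$. The difficulty is that $F'(x_0)$ is not self-adjoint, so one really only controls its symmetric part, and the positivity of the symmetric part is a delicate cancellation between the positive diagonal contribution proportional to $x_0(0)$ and the indefinite off-diagonal contribution proportional to $x_0'$; getting the right threshold $\sigma_0$ requires carefully tracking how the weight suppresses the off-diagonal part. Since this is precisely the content of~\cite[Lem.~3 \& 4]{MR1766765}, I would rely heavily on adapting Janno's estimates to the weighted inner product $\scalar{\cdot}{\cdot}_\sigma$, verifying that his one-sided (near-to-monotone) argument carries over verbatim once the weight is introduced, with the constant $c$ controlling how small the residual must be.
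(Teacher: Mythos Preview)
The paper does not supply its own proof of this lemma: the header explicitly marks it as \emph{cf.}~\cite[Lem.~3 \& 4]{MR1766765}, and no argument for it appears in the text or in the appendix. It is simply quoted from Janno's paper.

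Your sketch is consistent with the underlying ideas in that reference, and for part~(2) it is in fact simpler than you suggest: the solution~$\omega$ of~(\ref{eq:volterra}) does not depend on~$\sigma$ at all, so once you know (from Janno, or by differentiating the Volterra equation and using $x_0(0)>0$) that $\omega\in L_\infty(0,1)$, the bound
\[
\norm{\omega}{\sigma}^2 \le \norm{\omega}{\infty}^2 \int_0^1 e^{-2\sigma t}\,\de t \le \frac{\norm{\omega}{\infty}^2}{2\sigma}
\]
immediately gives $\norm{\omega}{\sigma}<c$ for all sufficiently large~$\sigma$. No resolvent estimate or operator-norm bound on $F'(x_0)^{-1}$ in the weighted space is needed. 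For part~(1) your outline (absorb the weight via $u(t)=e^{-\sigma t}v(t)$, isolate the positive contribution from $x_0(0)>0$ near the diagonal, and show the remainder involving $x_0'$ is $O(1/\sigma)$) is exactly the mechanism in Janno's Lemma~3, so there is nothing to compare against here beyond the citation.
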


\subsection{Approximation of the initial value}
\label{sec:approx-init}

Recall, for the solution to
the equation~(\ref{eq:volterra}) to exist, 
we required to know the value~$\xnull(0)$. Since this is not the case,
we need to find a good approximation to it, based on the given
data~$\yd$. This is done  by averaging
with the approximating mapping~$Q_{m}^0$ from example
\ref{ex:fsigma-constant}, formulated  in the following proposition, again
with proof postponed to the Appendix.
\begin{pro}\label{pro:x0strich-appr}
 Let $\sigma>0$, $\delta\le 1$ and $m\in \bbN$ be fixed, with $m\geq
 1/\delta$.  If the noise obeys
 Assumption~\ref{ass:noise}\eqref{ass:1} then
 \begin{equation*}
 \left|\sqrt{\frac1{\sqrt \delta}{[Q_m\yd](\sqrt
       \delta)}}-\xnull(0)\right| 
\le \frac{\left(1 +
    {4\norm{\xnull}{C^{1}(0,1)}^{2}}\right)}{\xnull(0)}\sqrt\delta,
 \end{equation*}
 if $[Q_m\yd](\sqrt\delta)\ge 0$ and
 \begin{equation*}
 \abs{x_0(0)} \le \frac{\left(1 +
    {4\norm{\xnull}{C^{1}(0,1)}^{2}}\right)}{\xnull(0)}\sqrt\delta,
 \end{equation*}
 otherwise.
\end{pro}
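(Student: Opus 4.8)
The plan is to reduce both assertions to the single scalar estimate
\[
\abs{[\qm\yd](\sqrt\delta) - \xnull(0)^{2}\sqrt\delta}\ \le\ \lr{1 + 4\norm{\xnull}{C^{1}(0,1)}^{2}}\delta,
\]
which I shall call the core estimate. Writing $A := [\qm\yd](\sqrt\delta)$ and using the elementary inequality $\abs{\sqrt a-\sqrt b}\le\abs{a-b}/\sqrt b$ (valid for $a\ge0,\ b>0$) with $a=A/\sqrt\delta$ and $b=\xnull(0)^{2}$, the first claim follows by dividing the core estimate by $\xnull(0)\sqrt\delta$, since $\abs{A/\sqrt\delta-\xnull(0)^2}=\abs{A-\xnull(0)^2\sqrt\delta}/\sqrt\delta$. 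For the second claim, if $A<0$ then $\xnull(0)^{2}\sqrt\delta\le\abs{A-\xnull(0)^{2}\sqrt\delta}$ is bounded by the right-hand side of the core estimate, and dividing by $\xnull(0)\sqrt\delta>0$ (here Assumption~\ref{ass:onx} is used) gives exactly $\abs{\xnull(0)}\le\frac{1+4\norm{\xnull}{C^{1}(0,1)}^{2}}{\xnull(0)}\sqrt\delta$. Thus everything rests on the core estimate.

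To prove it I would first record that, since $\qm$ is the $L_{2}$-orthogonal projection onto the piece-wise constant splines of Example~\ref{ex:fsigma-constant}, the number $[\qm\yd](\sqrt\delta)$ is simply the average of $\yd$ over the subinterval $\Delta_{j}\ni\sqrt\delta$. From $m\ge1/\delta$ and $\delta\le1$ one has $1/m\le\delta\le\sqrt\delta$, so $\Delta_{j}\subset(0,2\sqrt\delta)$ and $\sqrt\delta$ lies within $1/(2m)\le\delta/2$ of the midpoint of $\Delta_{j}$. Splitting $\yd=y_{0}+\delta\xi$ with $y_{0}=F(\xnull)$, the contribution of the noise to the average is at most the average of $\abs{\delta\xi}$, hence at most $\delta$ by Assumption~\ref{ass:noise}\eqref{ass:1} together with $\delta\le1$; this produces the summand $1\cdot\delta$ in the core estimate.

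The heart of the argument is the remaining (deterministic) term, for which I would establish the Taylor-type expansion of the autoconvolution near the origin,
\[
\abs{y_{0}(s) - \xnull(0)^{2}s}\ \le\ \norm{\xnull}{C^{1}(0,1)}^{2}\,s^{2},\quad s\in[0,1].
\]
This is obtained by writing the integrand as $\xnull(s-t)\xnull(t)-\xnull(0)^{2}=\xnull(s-t)\lr{\xnull(t)-\xnull(0)}+\xnull(0)\lr{\xnull(s-t)-\xnull(0)}$, bounding each increment by $\norm{\xnull^{\prime}}{\infty}$ times its argument, and integrating over $[0,s]$. Averaging this bound over $\Delta_{j}$, using $s\le2\sqrt\delta$ (hence $s^{2}\le4\delta$) for the dominant term $\norm{\xnull}{C^{1}(0,1)}^{2}s^{2}\le4\norm{\xnull}{C^{1}(0,1)}^{2}\delta$, and controlling the replacement of the average of the linear part $\xnull(0)^{2}s$ by $\xnull(0)^{2}\sqrt\delta$ through the midpoint displacement $\le\delta/2$, one collects the contributions into the summand $4\norm{\xnull}{C^{1}(0,1)}^{2}\delta$.

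The main obstacle is precisely this deterministic estimate: deriving the quadratic remainder bound for $F(\xnull)$ with the correct dependence on $\norm{\xnull}{C^{1}(0,1)}$, and then tracking the constants through the local averaging and through the shift of the evaluation point from the interval to $\sqrt\delta$, so that the final constant stays at the stated value. By contrast, the square-root linearization and the splitting into the two cases are routine once the core estimate is available, and the positivity $\xnull(0)>0$ is exactly what makes both the division by $\xnull(0)$ and the degenerate case $A<0$ meaningful.
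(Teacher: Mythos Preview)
Your reduction to a core scalar estimate and the subsequent square-root linearisation are exactly what the paper does; the paper also records $y_{0}'(0)=\xnull(0)^{2}$ and applies the same inequality $\abs{\sqrt a-b}\le\abs{a-b^{2}}/b$. Where you diverge is in the deterministic piece. The paper does \emph{not} average the Taylor-type bound over~$\Delta_{j}$; instead it evaluates at the single point $h=\sqrt\delta$ and controls the discretisation by the uniform estimate $\norm{Q_{m}y_{0}-y_{0}}{\infty}\le m^{-1}\norm{y_{0}'}{\infty}\le 2m^{-1}\norm{\xnull}{C^{1}(0,1)}^{2}$, together with the second-order Taylor remainder $\tfrac12\norm{y_{0}''}{\infty}h^{2}\le\tfrac32\norm{\xnull}{C^{1}(0,1)}^{2}\delta$. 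That split produces the coefficient $\tfrac72$ in front of $\norm{\xnull}{C^{1}(0,1)}^{2}$, which is then relaxed to the stated~$4$.

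Your averaging route is also valid and arguably more elementary (you never need $y_{0}''$), but note that your constant count slips: the midpoint-shift contribution $\xnull(0)^{2}\cdot\delta/2\le\tfrac12\norm{\xnull}{C^{1}(0,1)}^{2}\delta$ comes \emph{in addition to} the $4\norm{\xnull}{C^{1}(0,1)}^{2}\delta$ arising from $s^{2}\le4\delta$, so as written you land at $\tfrac92$ rather than~$4$. This is harmless for the downstream use of the proposition (only the order $\sqrt\delta$ matters there), but to recover the stated constant you would have to either sharpen the crude bound $s\le2\sqrt\delta$ or replace the averaging by the paper's point-evaluation plus the uniform projection estimate; combining the latter with your own first-order bound $\abs{y_{0}(s)-\xnull(0)^{2}s}\le\norm{\xnull}{C^{1}(0,1)}^{2}s^{2}$ at $s=\sqrt\delta$ would in fact yield the coefficient~$3$.
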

Thus we use the approximation as found in Proposition~\ref{pro:x0strich-appr} to define the reference element
$x_{\ast}$. Specifically, given~$\delta>0$ we let $x_{\ast}$ be the
constant function defined as 
\begin{equation}
  \label{eq:xast}
x_{\ast}\equiv 
\begin{cases}
\sqrt{\frac1{\sqrt \delta}{[Q_m\yd](\sqrt \delta})},&
  \text { if }[Q_m\yd](\sqrt\delta)\ge 0\\
 0, & \text{ else.}
\end{cases}  
\end{equation}

The above approximation cannot be used for general noise which just
belongs to~$L_{2}(0,1)$.  We give the following result for this case.
\begin{pro}\label{pro:different-noise}
  Suppose that the noise obeys
  Assumption~\ref{ass:1}\eqref{ass:1prime}.  Then there is a
  constant~$D< \infty$ such that for~$ h:= \left(\frac23
    \|x_0\|^2_{C^1(0,1)}\right)^{-\frac25}\delta^{\frac25}$ we have that
\[
\left|\sqrt{\frac2{h^2}\int_0^h \yd(t)\de t}-\xnull(0)\right| \le
\frac{D }{\xnull(0)}\delta^{\frac25},\quad \text{as}\ \delta \to 0. 
\]
\end{pro}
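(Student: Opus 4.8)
The plan is to read the claim as a bias–variance decomposition of the linear functional $\frac{2}{h^{2}}\int_{0}^{h}\yd(t)\de t$ as an estimator of $\xnull(0)^{2}$, and then to transfer the resulting estimate through a square root. The starting point is a Taylor/Lipschitz expansion of the exact data $y_{0}=F(\xnull)$ at the origin. Writing $\xnull(u)=\xnull(0)+\rho(u)$ with $\abs{\rho(u)}\le\norm{\xnull^{\prime}}{\infty}\,u$, one gets
\[
y_{0}(s)=\int_{0}^{s}\xnull(s-t)\xnull(t)\de t=\xnull(0)^{2}s+R(s),
\]
where $R(s)=\xnull(0)\int_{0}^{s}\bigl(\rho(s-t)+\rho(t)\bigr)\de t+\int_{0}^{s}\rho(s-t)\rho(t)\de t$. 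The first term is $O(s^{2})$ with coefficient $\le\xnull(0)\,\norm{\xnull^{\prime}}{\infty}\le\norm{\xnull}{C^{1}(0,1)}^{2}$, and the product term is $O(s^{3})$, so that $\abs{R(s)}\le\norm{\xnull}{C^{1}(0,1)}^{2}s^{2}+O(s^{3})$. This is the only place where the smoothness Assumption~\ref{ass:onx} (in particular $\norm{\xnull}{C^{1}(0,1)}\le K$ and $\xnull(0)>0$) is used.

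Next I would average this expansion against the weight $2/h^{2}$ over $[0,h]$. Since $\frac{2}{h^{2}}\int_{0}^{h}s\,\de s=1$, the linear part reproduces $\xnull(0)^{2}$ exactly, while the remainder contributes at most
\[
\frac{2}{h^{2}}\int_{0}^{h}\abs{R(s)}\de s\le\frac{2}{3}\norm{\xnull}{C^{1}(0,1)}^{2}h+O(h^{2}).
\]
This identifies the constant $\frac{2}{3}\norm{\xnull}{C^{1}(0,1)}^{2}$ occurring in the definition of $h$: at the prescribed value $h=\bigl(\frac{2}{3}\norm{\xnull}{C^{1}(0,1)}^{2}\bigr)^{-2/5}\delta^{2/5}$ the deterministic error equals $\bigl(\frac{2}{3}\norm{\xnull}{C^{1}(0,1)}^{2}\bigr)^{3/5}\delta^{2/5}$, with the $O(h^{2})=O(\delta^{4/5})$ terms being of lower order and absorbed into $D$ as $\delta\to0$.

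For the stochastic part I would estimate the propagated noise $\frac{2}{h^{2}}\int_{0}^{h}(\yd-y_{0})(t)\de t$ by Cauchy–Schwarz: the integral over $[0,h]$ is bounded by $\sqrt{h}$ times the $L_{2}(0,h)$-norm of the data error, hence by $h^{-3/2}$ times the $L_{2}$ data-error level permitted in Assumption~\ref{ass:noise}\eqref{ass:1prime}, i.e.\ by a term of order $\delta\,h^{-3/2}$. At $h\sim\delta^{2/5}$ this is again of order $\delta^{2/5}$, so the prescribed $h$ is precisely the choice that balances the deterministic and the noise contributions. Adding the two bounds shows that $\frac{2}{h^{2}}\int_{0}^{h}\yd$ differs from $\xnull(0)^{2}$ by some $E$ with $\abs{E}\le C\delta^{2/5}$.

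Finally I would transfer the estimate from $\xnull(0)^{2}$ to $\xnull(0)$. Because the data are nonnegative (Remark~\ref{rem:data-nonneg}), the radicand $\frac{2}{h^{2}}\int_{0}^{h}\yd$ is nonnegative, so its square root is well defined and $\xnull(0)^{2}+E\ge0$. Writing $a=\xnull(0)>0$ and using the identity $\sqrt{a^{2}+E}-a=E/(\sqrt{a^{2}+E}+a)$ together with $\sqrt{a^{2}+E}+a\ge a$ yields
\[
\abs{\sqrt{\tfrac{2}{h^{2}}\textstyle\int_{0}^{h}\yd}-\xnull(0)}\le\frac{\abs{E}}{\xnull(0)}\le\frac{D}{\xnull(0)}\delta^{2/5},
\]
which is the claim. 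The main obstacle is the first step: pinning down the quadratic remainder $R(s)$ with the sharp constant $\norm{\xnull}{C^{1}(0,1)}^{2}$, since only this precise constant makes the prescribed $h$ the balancing choice and produces the exact rate $\delta^{2/5}$; once that is in place, the Cauchy–Schwarz noise bound, the balancing of exponents, and the square-root transfer (including the nonnegativity of the radicand) are routine.
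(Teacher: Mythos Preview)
Your proposal is correct and follows essentially the same approach as the paper: the paper obtains the pointwise bound $\abs{y_{0}(t)-\xnull(0)^{2}t}\le\frac{3}{2}\norm{\xnull}{C^{1}}^{2}t^{2}$ from Taylor's formula and the explicit expression for $y_{0}''$, whereas you expand the convolution integral directly to get an equivalent $O(t^{2})$ remainder, and thereafter both proofs integrate over $[0,h]$, apply Cauchy--Schwarz to the noise, and balance $h\norm{\xnull}{C^{1}}^{2}$ against $\delta h^{-3/2}$. Your final square-root transfer via $\sqrt{a^{2}+E}-a=E/(\sqrt{a^{2}+E}+a)$ makes explicit a step the paper leaves implicit (it ends at the bound on $\abs{\frac{2}{h^{2}}\int_{0}^{h}\yd - y_{0}'(0)}$), but this is the same device used in the proof of Proposition~\ref{pro:x0strich-appr}.
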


\begin{rem}
  Estimation of the derivative of a function under $L_{2}$-noise has
  not been studied as often. The best results in this direction are
  presented in~\cite{MR2240638,MR3043622}. This will not immediately give results
  under the smoothness Assumption~\ref{ass:onx}. It is not clear to
  the authors, whether a reconstruction rate~$\delta^{1/2}$ is
  possible under~$L_{2}$-noise.
\end{rem}

\subsection{Main result}
\label{sec:main-result}

We now recall the equation~(\ref{eq:Qbase}), as
\begin{equation}\label{eq}
 Q(\alpha (x_*-x) + \yd - F(Q x))=0,
\end{equation}
where $Q$ is any, not necessarily orthogonal, projection. We are interested in its
solution. Therefore we introduce the family of linear 
operators~$H_\alpha:= \lr{\alpha \Id + Q F^{\prime}(\xnull)Q},\ \alpha>0$.
For projections~$Q$ onto some space~$X_{m}\subset X$, orthogonal in a
suited~$L_{2}^{\sigma}(0,1)$ the mapping~$H_{\alpha}$
is continuously invertible, and maps~$X_{m}$ into $X_{m}$, which is
easy to check. For given~$\alpha>0$, to be specified later, we
apply~$H_{\alpha}^{-1}$ to both sides. We see that 
\begin{align*}
 0 =& H_\alpha\inv Q\big(\alpha ( x_*-x) + y^\delta - F(Q x)\big)\\
 =& H_\alpha\inv Q \big( y^\delta - F(\xnull) -F'(\xnull)(Q x -
    \xnull) - F(Q x - \xnull) + \alpha ( x_*-x)\big)\\ 
 =& H_\alpha\inv \Big( -\big(Q F'(\xnull)(Q( x - \xnull) )+\alpha(Q
    x-\xnull)\big) + Q F'(\xnull)(\xnull-Q \xnull) \\ 
 &\qquad\ \ + Q \big( y^\delta - y_0 - F(Q x-\xnull )\big) + \alpha (Q  x_*-\xnull)\Big)\\
 =& \xnull -Q x + \\ 
  \begin{split}
H_\alpha\inv \Big( Q \big( F'(\xnull)(\xnull-Q \xnull) + y^\delta
  - y_0- 
F(Q_mx-\xnull )\big) + \alpha (Q  x_*-\xnull) \Big).     
  \end{split}
\end{align*}
This can be written as a fixed-point equation for the (continuous nonlinear)
mapping~$G\colon X\to X$, given by
\begin{equation}
  \label{eq:G-def}
  \begin{split}
  G(Q x) := \xnull + H_\alpha\inv \Big( Q \big( F'(\xnull)(\xnull-Q
  \xnull) + \yd - y_0\\
- F(Q_mx-\xnull )\big) + \alpha (Q  x_*-\xnull) \Big),\ x\in L_{2}^{\sigma}(0,1). 
  \end{split}
\end{equation}
The fixed-point equation we consider is now given as
\begin{equation}\label{fix}
 G(Q x ) = Q x,
\end{equation}
on some domain to be determined later.
The following holds true.
\begin{pro}\label{pro:contraction}
Let~$\sigma>0$ be such that~$F^{\prime}(\xnull)$ is accretive on~$L_{2}^{\sigma}(0,1)$.
Suppose that~$m\geq \sigma/\delta$, $\delta\leq 1$ and that the
element~$\omega$ as the solution to~(\ref{eq:volterra})
satisfies~$\norm{\omega}{\sigma} < 1/4$. If the noise obeys
Assumption~\ref{ass:noise}(\ref{ass:1}) then there is some $r>0$ such that the
  mapping~$G$ obeys
$$
G\lr{B(\xnull,r) \cap X_{m}} \subset B(\xnull,r) \cap X_{m}.
$$
Consequently it has a fixed point.
\end{pro}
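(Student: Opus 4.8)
The plan is to obtain the fixed point from Brouwer's theorem, so it suffices to verify the self-mapping property on the nonempty compact convex set $B(\xnull,r)\cap X_m$; continuity of $G$ is automatic, since $F$ is continuous and $H_\alpha\inv$ is a bounded linear map. First I would record that $G$ really lands in $X_m$. Decomposing $L_2^\sigma(0,1)=X_m\oplus X_m^{\perp}$ orthogonally in $\scalar{\cdot}{\cdot}_\sigma$, the operator $H_\alpha=\alpha\Id+QF'(\xnull)Q$ is block diagonal: it is invertible on $X_m$ and acts as $\alpha\Id$ on $X_m^{\perp}$, whence $H_\alpha\inv$ acts as $\tfrac1\alpha\Id$ on $X_m^{\perp}$. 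The only part of the argument of $H_\alpha\inv$ in~(\ref{eq:G-def}) lying outside $X_m$ is $-\alpha(\Id-Q)\xnull$, which $H_\alpha\inv$ sends to $-(\Id-Q)\xnull$; added to the leading $\xnull$ this produces $Q\xnull\in X_m$. Hence $G(Qx)\in X_m$, and $B(\xnull,r)\cap X_m$ is nonempty as soon as $r\ge\norm{\xnull-Q\xnull}{\sigma}$.

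Second, I would extract the two operator estimates that accretivity buys. Since $\scalar{QF'(\xnull)Qv}{v}_\sigma=\scalar{F'(\xnull)Qv}{Qv}_\sigma\ge0$ for $v\in X_m$ by Lemma~\ref{lem:suumary-sigma}, we get $\scalar{H_\alpha v}{v}_\sigma\ge\alpha\norm{v}{\sigma}^2$, hence $\norm{H_\alpha\inv}{\sigma\to\sigma}\le1/\alpha$ and $\norm{\alpha H_\alpha\inv}{\sigma\to\sigma}\le1$. I would also record the algebraic identity $\alpha H_\alpha\inv=\Id-H_\alpha\inv QF'(\xnull)Q$, valid on $X_m$, which is the device for trading one power of $F'(\xnull)$ against a factor $\alpha$.

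Third, I would split $G(Qx)-\xnull=H_\alpha\inv\big(QF'(\xnull)(\xnull-Q\xnull)+Q(\yd-y_0)-QF(Qx-\xnull)+\alpha(Qx_\ast-\xnull)\big)$ into four contributions. The approximation term is controlled by $\tfrac1\alpha\norm{F'(\xnull)}{\sigma\to\sigma}\norm{\xnull-Q\xnull}{\sigma}\le\tfrac{2}{\alpha}\norm{\xnull}{\sigma}\tfrac{L}{m}\norm{\xnull}{H_1^\sigma}$ via Lemma~\ref{lem:frechet} and Assumption~\ref{ass:onQ}; the noise term by $\tfrac1\alpha\norm{\yd-y_0}{\sigma}$ using Assumption~\ref{ass:noise}\eqref{ass:1}; and the genuinely nonlinear term by $\tfrac1\alpha\norm{F(Qx-\xnull)}{\sigma}\le\tfrac1\alpha\norm{Qx-\xnull}{\sigma}^2\le r^2/\alpha$, using the quadratic bound~(\ref{F}). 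These combine into $\norm{G(Qx)-\xnull}{\sigma}\le\rho+r^2/\alpha$ with $\rho$ independent of $x$, and the self-mapping property then follows by solving the scalar inequality $\rho+r^2/\alpha\le r$, which is feasible precisely when $\rho\le\alpha/4$; Brouwer's theorem then supplies the fixed point.

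The crux, and the step I expect to be the main obstacle, is the reference-element term $\alpha H_\alpha\inv(Qx_\ast-\xnull)$, because the crude bound $\norm{\alpha H_\alpha\inv}{\sigma\to\sigma}\norm{Qx_\ast-\xnull}{\sigma}$ only yields $O(\norm{\xnull-\xnull(0)}{\sigma})=O(\norm{F'(\xnull)\omega}{\sigma})$, which is of order one and useless. Here I would use the source representation $\xnull-\xnull(0)=F'(\xnull)\omega$ from~(\ref{eq:volterra}) and the choice~(\ref{eq:xast}) of $x_\ast$ to write $Qx_\ast-\xnull=-(\Id-Q)\xnull-QF'(\xnull)\omega+Qe$, where $e:=x_\ast-\xnull(0)$ satisfies $\norm{e}{\sigma}\le|x_\ast-\xnull(0)|=O(\sqrt\delta)$ by Proposition~\ref{pro:x0strich-appr} (for which $m\ge\sigma/\delta$ gives $m\ge1/\delta$ once $\sigma\ge1$, which is available by Lemma~\ref{lem:suumary-sigma}). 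Splitting $\omega=Q\omega+(\Id-Q)\omega$ and invoking the identity $\alpha H_\alpha\inv=\Id-H_\alpha\inv QF'(\xnull)Q$ turns $\alpha H_\alpha\inv QF'(\xnull)Q\omega$ into $\alpha Q\omega-\alpha^2H_\alpha\inv Q\omega$, of norm $\le2\alpha\norm{\omega}{\sigma}$, while the tail $\alpha H_\alpha\inv QF'(\xnull)(\Id-Q)\omega$ is $O(1/m)$ provided $\omega\in H_1^\sigma$, a regularity fact to be deduced from $\xnull\in W_\infty^2(0,1)$. Thus this term contributes a multiple of $\alpha\norm{\omega}{\sigma}$ to $\rho$, so that after dividing by $\alpha$ the only order-one survivor in $\rho/\alpha$ is proportional to $\norm{\omega}{\sigma}$; the hypothesis $\norm{\omega}{\sigma}<1/4$ is exactly what keeps this below the threshold needed for $\rho\le\alpha/4$, the remaining pieces ($O(1/(\alpha m))$, the noise, and $O(\sqrt\delta)$) being absorbable. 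I would finally fix $r$ as the smaller root of $\rho+r^2/\alpha=r$, checking $r\ge\norm{\xnull-Q\xnull}{\sigma}$ so that the ball meets $X_m$.
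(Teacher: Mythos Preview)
Your argument mirrors the paper's almost step for step: the same four-term split of $G(Qx)-\xnull$, the same handling of the reference-element term via $\xnull-\xnull(0)=F'(\xnull)\omega$ with the decomposition $\omega=Q\omega+(\Id-Q)\omega$, the same reduction to a scalar quadratic in $r$, and then a fixed-point theorem (you invoke Brouwer on the finite-dimensional slice $B(\xnull,r)\cap X_m$, the paper invokes Schauder; either works, and your explicit verification that $G$ lands in $X_m$ is a clarification the paper leaves implicit).

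One constant needs tightening. Your algebraic-identity route gives $\norm{\alpha H_\alpha\inv QF'(\xnull)Q\omega}{\sigma}\le 2\alpha\norm{\omega}{\sigma}$, and with that factor the feasibility condition $4\rho/\alpha<1$ would require $\norm{\omega}{\sigma}<1/8$, not $1/4$ as in the statement. The paper avoids this by using the other standard consequence of accretivity, namely $\norm{H_\alpha\inv QF'(\xnull)Q}{\sigma\to\sigma}\le 1$ (not just $\norm{\alpha H_\alpha\inv}{\sigma\to\sigma}\le 1$), which bounds the same term by $\alpha\norm{\omega}{\sigma}$; that is exactly what makes the hypothesis $\norm{\omega}{\sigma}<1/4$ sufficient. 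Replace your identity trick by this sharper estimate and the rest goes through.
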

We are now in the position to formulate the main result, 
\begin{thm}\label{thm:main}
Suppose that the noise obeys Assumption~\ref{ass:noise}\eqref{ass:1},
and that the true solution obeys Assumption~\ref{ass:onx}, and
that the discretization is with spaces~$X_{m}$ which fulfill
Assumption~\ref{ass:onQ}. Moreover let the assumptions of
Proposition \ref{pro:contraction} hold. There is a constant~$c>0$ such
that for~$\alpha:= c \sqrt\delta$,  Lavrent\cprime ev regularization with
discretization according to~(\ref{eq:Qbase}) has a solution~$\xad\in
X_{m}$ which obeys
$$
\norm{\xnull - \xad}{} \leq c \sqrt\delta,\quad \text{as}\ \delta\to 0.
$$
\end{thm}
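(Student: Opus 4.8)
The plan is to exploit the fixed-point representation underlying the definition of $G$ in~(\ref{eq:G-def}). By Proposition~\ref{pro:contraction} there is a fixed point $\xad = Q\xad\in X_{m}$ with $\norm{\xad-\xnull}{\sigma}\le r$, and the self-mapping argument already forces $r=\cO(\sqrt\delta)$. Substituting $\xad$ into~(\ref{eq:G-def}) and rearranging gives
$$
\xad - \xnull = H_\alpha\inv\Big(Q\big(F'(\xnull)(\xnull - Q\xnull) + \yd - y_0 - F(\xad - \xnull)\big) + \alpha(Q x_* - \xnull)\Big).
$$
The first step is to record the two operator bounds on which everything rests: since $\qms$ is orthogonal in $L_2^\sigma(0,1)$ one has $\norm{Q}{\sigma\to\sigma}\le 1$, and since $QF'(\xnull)Q$ is accretive on $L_2^\sigma(0,1)$ (using self-adjointness of $Q$ in $\scalar{\cdot}{\cdot}_\sigma$ together with part~(1) of Lemma~\ref{lem:suumary-sigma}) the resolvent estimate $\norm{H_\alpha\inv}{\sigma\to\sigma}\le 1/\alpha$ holds. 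I would then estimate each contribution in the $\sigma$-norm and divide by $\alpha$.

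Next I would treat the four terms separately. The discretization term $QF'(\xnull)(\xnull-Q\xnull)$ is bounded by $2\norm{\xnull}{\sigma}\,\frac{L}{m}\norm{\xnull}{C^{1}(0,1)}$ via Lemma~\ref{lem:frechet} and Assumption~\ref{ass:onQ}; with $m\ge\sigma/\delta$ this is $\cO(\delta)$, hence $\cO(\sqrt\delta)$ after applying $H_\alpha\inv$. The data term $Q(\yd-y_0)$ is controlled in $\sigma$-norm by $\norm{\yd-y_0}{}$, which under Assumption~\ref{ass:noise}\eqref{ass:1} is $\cO(\delta)$, again giving $\cO(\sqrt\delta)$. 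The nonlinear term obeys $\norm{F(\xad-\xnull)}{\sigma}\le\norm{\xad-\xnull}{\sigma}^{2}\le r^{2}$ by Lemma~\ref{lem:frechet}; after division by $\alpha=c\sqrt\delta$ this is $\cO(r^{2}/\sqrt\delta)=\cO(\sqrt\delta)$, precisely because $r=\cO(\sqrt\delta)$.

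The crucial term is the reference term $\alpha(Qx_*-\xnull)$, and this is where the classical source-condition argument of~\cite{MR1766765} must be adapted to the presence of $Q$. I would split $Qx_*-\xnull = Q(x_*-\xnull(0)) + Q(\xnull(0)-\xnull) + (Q\xnull-\xnull)$. The first summand is handled by Proposition~\ref{pro:x0strich-appr}, which gives $\abs{x_*-\xnull(0)}=\cO(\sqrt\delta)$, so that $\alpha H_\alpha\inv$ of it is $\cO(\sqrt\delta)$, while the last summand is of order $\cO(\delta)$ after division by $\alpha$ and hence negligible. For the middle summand I use $\xnull(0)-\xnull = -F'(\xnull)\omega$ from~(\ref{eq:volterra}) and split $F'(\xnull)\omega = F'(\xnull)Q\omega + F'(\xnull)(\Id-Q)\omega$. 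The off-range part contributes $\cO(1/m)=\cO(\delta)$ via Assumption~\ref{ass:onQ}, using that $\omega$, as the solution of a Volterra equation with $W^{2}_{\infty}$ right-hand side, lies in $H^{\sigma}_{1}(0,1)$. For the in-range part I exploit the algebraic identity $QF'(\xnull)Q\omega = (H_\alpha-\alpha\Id)Q\omega$, whence $-\alpha H_\alpha\inv QF'(\xnull)Q\omega = -\alpha Q\omega + \alpha^{2}H_\alpha\inv Q\omega$, whose $\sigma$-norm is at most $2\alpha\norm{\omega}{\sigma}=\cO(\sqrt\delta)$ by part~(2) of Lemma~\ref{lem:suumary-sigma}.

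Collecting the estimates yields $\norm{\xad-\xnull}{\sigma}=\cO(\sqrt\delta)$, and passing to the unweighted norm via the equivalence~(\ref{norm:equiv}) gives $\norm{\xnull-\xad}{}\le e^{\sigma}\norm{\xnull-\xad}{\sigma}=\cO(\sqrt\delta)$, which is the claim. The main obstacle is the reference term: making the source-condition cancellation work in the two-sided discretized setting requires carefully separating the range and orthogonal-complement parts of $F'(\xnull)\omega$ and controlling the latter through the regularity of $\omega$. It is this interplay, rather than any single estimate, that pins down the scaling $\alpha\sim\sqrt\delta$ and the admissible range of the constant $c$.
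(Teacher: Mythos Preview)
Your proposal is correct and follows the same line as the paper: the fixed point $\xad$ furnished by Proposition~\ref{pro:contraction} lies in $B(\xnull,r)$, and the term-by-term bounds you give for the discretization, data, nonlinear, and reference contributions are precisely the $I_1,\dots,I_4$ estimates carried out in the proof of that proposition. The paper's own proof of Theorem~\ref{thm:main} is, however, much shorter: rather than re-estimating each term, it simply reads off $r\le 1/(2a)=\alpha/(2\|Q\|_{\sigma\to\sigma})\le\alpha/2=\tfrac c2\sqrt\delta$ from the explicit formula for $r$ in Lemma~\ref{lem:quadrat} (with $a=\|Q\|_{\sigma\to\sigma}/\alpha$), so once you have accepted that ``the self-mapping argument already forces $r=\cO(\sqrt\delta)$'' your subsequent paragraphs are redundant---the bound $\|\xad-\xnull\|_\sigma\le r$ together with~(\ref{norm:equiv}) already yields the claim.
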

Again, we postpone the proofs, both for
Proposition~\ref{pro:contraction} and Theorem~\ref{thm:main} to the Appendix.

We shall next mention the modification of the main results when the
noise obeys Assumption~\ref{ass:1}\eqref{ass:1prime}.
As the interested reader may check, the arguments used in the proofs
of Proposition~\ref{pro:contraction} and Theorem~\ref{thm:main} remain
valid, except that the optimal parameter choice strategy is different.
\begin{cor}
  \label{cor:noise1prime}
Under the assumptions of Theorem~\ref{thm:main}, but with noise
fulfilling Assumption~\ref{ass:1}\eqref{ass:1prime} we have the
following. There is a constant~$c>0$ such
that for  the parameter~$\alpha$ is chosen from
\begin{equation*}
 \alpha = c \delta^{\frac25}
\end{equation*}
this yields
\begin{equation*}
 \norm{\xnull - \xad}{} \leq c\delta^{\frac25},\quad \text{as}\
 \delta\to 0.
\end{equation*}
\end{cor}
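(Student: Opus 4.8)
The plan is to reuse, essentially verbatim, the fixed-point construction set up before Theorem~\ref{thm:main}, and to re-run the error estimate from its proof, modifying only the single place where the $L_2$-noise model changes the bounds. First I would invoke Proposition~\ref{pro:contraction}: its self-mapping argument for the map~$G$ of~(\ref{eq:G-def}) does not rely on the precise form of the noise bound, but only on $\norm{Q(\yd - y_0)}{\sigma}$ and the initial-value defect $\abs{x_\ast - \xnull(0)}$ being small. Hence a fixed point $\xad = Qx$ of~(\ref{fix}) persists, now on a ball of radius of order $\delta^{2/5}$. Evaluating~(\ref{eq:G-def}) at this fixed point yields
\[
\xad - \xnull = H_\alpha\inv\Bigl(Q\bigl(F'(\xnull)(\xnull - Q\xnull) + \yd - y_0 - F(\xad - \xnull)\bigr) + \alpha(Q x_\ast - \xnull)\Bigr),
\]
which I would bound term by term using $\norm{H_\alpha\inv}{\sigma\to\sigma}\le 1/\alpha$ and $\norm{\alpha H_\alpha\inv}{\sigma\to\sigma}\le 1$, both consequences of the accretivity of $F'(\xnull)$ on $L_2^\sigma(0,1)$.

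Next I would separate the four contributions exactly as in the proof of Theorem~\ref{thm:main}. The discretization term $H_\alpha\inv Q F'(\xnull)(\xnull - Q\xnull)$ is controlled, through Lemma~\ref{lem:frechet} and Assumption~\ref{ass:onQ}, by $2\norm{\xnull}{\sigma}(L/m)\norm{\xnull}{H^{\sigma}_{1}(0,1)}/\alpha$, which is of order $\delta/\alpha$ once $m\ge\sigma/\delta$; the data term $H_\alpha\inv Q(\yd - y_0)$ is no larger than in the sup-norm case and again of order $\delta/\alpha$; and the quadratic term $H_\alpha\inv Q F(\xad - \xnull)$ is absorbed by the smallness of the radius, using~(\ref{F}). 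The decisive term is the bias $\alpha H_\alpha\inv(Q x_\ast - \xnull)$. Writing $x_\ast - \xnull = (x_\ast - \xnull(0)) + (\xnull(0) - \xnull)$ and inserting the source representation~(\ref{eq:volterra}), $\xnull(0) - \xnull = -F'(\xnull)\omega$, the second summand splits, after $\omega = Q\omega + (\omega - Q\omega)$ and the identity $Q F'(\xnull) Q = H_\alpha - \alpha\Id$ on $X_m$, into a piece of order $\alpha\norm{\omega}{\sigma}$ and a piece of order $\norm{\omega - Q\omega}{\sigma}$, the latter being $O(1/m)$ by Assumption~\ref{ass:onQ}.

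The genuinely new ingredient is the remaining summand $\alpha H_\alpha\inv Q(x_\ast - \xnull(0))$. Under Assumption~\ref{ass:noise}\eqref{ass:1prime} the initial-value defect is no longer $O(\sqrt\delta)$ but is governed by Proposition~\ref{pro:different-noise}, giving $\abs{x_\ast - \xnull(0)} = O(\delta^{2/5})$; since $\norm{\alpha H_\alpha\inv}{\sigma\to\sigma}\le 1$, this summand is $O(\delta^{2/5})$. Collecting the estimates, there is a constant $C$ with
\[
\norm{\xad - \xnull}{\sigma} \le C\lr{\frac{\delta}{\alpha} + \alpha + \frac1m + \delta^{2/5}}.
\]
Choosing $m\ge\sigma/\delta$ removes the $1/m$ contribution, and the balance on the right is then dominated by $\delta^{2/5}$: for $\alpha$ proportional to $\delta^{2/5}$ one has $\delta/\alpha = O(\delta^{3/5})$ and $\alpha = O(\delta^{2/5})$, both below the $\delta^{2/5}$ threshold. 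Hence $\alpha = c\delta^{2/5}$ gives $\norm{\xad - \xnull}{\sigma} = O(\delta^{2/5})$, and the norm equivalence~(\ref{norm:equiv}) transfers this to the unweighted norm, which is the claim.

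The conceptual content here is light, so the main obstacle is bookkeeping: I must re-verify that the self-mapping step of Proposition~\ref{pro:contraction} still closes when the admissible radius is relaxed from order $\sqrt\delta$ to order $\delta^{2/5}$. By~(\ref{F}) the quadratic term contributes, after $H_\alpha\inv$, a quantity of order $r^2/\alpha$; with $r\sim\delta^{2/5}$ and $\alpha\sim\delta^{2/5}$ this is of order $\delta^{4/5}/\delta^{2/5} = \delta^{2/5}$, so the ball is preserved, but this ratio must be checked explicitly rather than quoted. One must also confirm that the constant $\co$ and the threshold $\sigma_0$ furnished by Lemma~\ref{lem:suumary-sigma} remain compatible with the enlarged radius, so that $F'(\xnull)$ is still accretive and $\norm{\omega}{\sigma}$ still small enough for the fixed-point argument to apply unchanged.
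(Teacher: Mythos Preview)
Your proposal is correct and follows exactly the route the paper indicates. The paper does not supply a detailed proof of this corollary, stating only that ``the arguments used in the proofs of Proposition~\ref{pro:contraction} and Theorem~\ref{thm:main} remain valid, except that the optimal parameter choice strategy is different''; your write-up is a faithful execution of this, correctly isolating the single structural change---Proposition~\ref{pro:x0strich-appr} (rate $\sqrt\delta$) is replaced by Proposition~\ref{pro:different-noise} (rate $\delta^{2/5}$) in the bound for $\abs{x_\ast - \xnull(0)}$---and rebalancing $\alpha$ accordingly.
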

\section{Extension}
\label{sec:extension}

We will extend the main results to the situation when the projections
used in~(\ref{eq}) are not orthogonal, but the corresponding
spaces~$X_{m}$ still obey Assumption~\ref{ass:onQ}.

As we know from Lemma~\ref{lem:suumary-sigma} we can choose
the value of $\sigma>0$ such that accretivity of the
operator~$F^{\prime}(\xnull)$ holds in~$L_{2}^{\sigma}(0,1)$. This, of
course, extends to accretivity of the mapping~$QF^{\prime}(\xnull)
Q$, whenever~$Q$ is an orthogonal projection in~$L_{2}^{\sigma}(0,1)$.

In some cases, the accretivity extends at the expense of an additional
correction term to nonorthogonal projections.
 The prototypical example is the projection onto
the piece-wise constant functions, which is orthogonal in
$L_{2}(0,1)$, but not in $L_{2}^{\sigma}(0,1)$. The
following 'closeness' can be established.

\begin{lem}
  \label{lem:qm-qms}
Consider the spaces~$\tilde X_{m}$ as in
Example~\ref{ex:fsigma-constant}, and let $Q_{m}:= Q_{m}^{0}$ be the
projection, which is orthogonal in $L_{2}(0,1)$. Then, for $m\geq
\sigma$ we have that
$$
\norm{Q_{m} - \qms}{\sigma\to \sigma}\leq 2 \frac \sigma m.
$$
Consequently we find that
$$
\norm{Q_{m}}{\sigma\to \sigma} \leq 1 + 2 \frac \sigma m.
$$
\end{lem}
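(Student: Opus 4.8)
The plan is to exploit that $\qm$ and $\qms$ are projections onto the \emph{same} space $\tilde X_{m}$ of piece-wise constant functions, differing only through the inner product that defines orthogonality. Writing $w(t):=e^{-2\sigma t}$, the operator $\qm$ replaces $x$ on each subinterval $\Delta_{i}$ by its Lebesgue average $a_{i}:=m\int_{\Delta_{i}}x\de t$, whereas $\qms$ replaces it by the $w$-weighted average $b_{i}:=\lr{\int_{\Delta_{i}}w\de t}\inv\int_{\Delta_{i}}wx\de t$. Both act locally, so $(\qm-\qms)x$ is again piece-wise constant, taking the value $a_{i}-b_{i}$ on $\Delta_{i}$. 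The second assertion will then be immediate from the first: since $\qms$ is orthogonal in $L_{2}^{\sigma}(0,1)$ it has operator norm one, and the triangle inequality gives $\norm{\qm}{\sigma\to\sigma}\le\norm{\qms}{\sigma\to\sigma}+\norm{\qm-\qms}{\sigma\to\sigma}\le 1+2\sigma/m$.

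For the main estimate I would decompose both weighted norms over the partition, namely
\[
\norm{(\qm-\qms)x}{\sigma}^{2}=\sum_{i}(a_{i}-b_{i})^{2}\int_{\Delta_{i}}w\de t,\qquad
\norm{x}{\sigma}^{2}=\sum_{i}\int_{\Delta_{i}}wx^{2}\de t,
\]
so that it suffices to prove the single-interval inequality $(a_{i}-b_{i})^{2}\int_{\Delta_{i}}w\de t\le(2\sigma/m)^{2}\int_{\Delta_{i}}wx^{2}\de t$ and sum. The key algebraic observation is that $a_{i}-b_{i}=\int_{\Delta_{i}}x\,\phi\de t$ with $\phi:=m-w\lr{\int_{\Delta_{i}}w\de t}\inv$, and that $\int_{\Delta_{i}}\phi\de t=0$ because both projections reproduce constants. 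Splitting the weight as $x\phi=(\sqrt w\,x)(\phi/\sqrt w)$ and applying Cauchy--Schwarz yields $(a_{i}-b_{i})^{2}\le\lr{\int_{\Delta_{i}}wx^{2}\de t}\lr{\int_{\Delta_{i}}\phi^{2}/w\de t}$, so the claim reduces to the purely weight-dependent bound
\[
\lr{\int_{\Delta_{i}}\frac{\phi^{2}}{w}\de t}\lr{\int_{\Delta_{i}}w\de t}\le\lr{\frac{2\sigma}{m}}^{2}.
\]

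This last inequality is where the real work sits. After the affine substitution $t=(i-1)/m+s/m$, $s\in[0,1]$, the factors $e^{-2\sigma(i-1)/m}$ cancel and the left-hand side becomes a function $P(\tau)$ of the \emph{single} parameter $\tau:=\sigma/m$ alone; the hypothesis $m\ge\sigma$ is precisely what confines $\tau$ to $(0,1]$. A short Taylor expansion shows $P(\tau)=\tfrac13\tau^{2}+O(\tau^{3})$ as $\tau\to0$, so the target $P(\tau)\le 4\tau^{2}$ holds with a wide margin near the origin, and since the desired exponent already matches the leading order, only the uniform validity over the compact range $\tau\in(0,1]$ remains. I expect this uniform control of the explicit expression $P$ on $(0,1]$ --- rather than its mere asymptotics --- to be the main, though mild, obstacle; the generous slack (the sharp constant is of the order $1/\sqrt3$ rather than $2$, and $P(1)\approx 0.38\ll 4$) means that even a crude endpoint-plus-monotonicity estimate will suffice.
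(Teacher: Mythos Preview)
Your opening premise is wrong, and this undermines the whole argument. In this paper $Q_m^\sigma$ is \emph{not} the $\sigma$-orthogonal projection onto the piece-wise constants $\tilde X_m$; by Lemma~\ref{lem:02sigma} (and confirmed by the explicit formulas in \S\ref{sec:numerics}) it is $Q_m^\sigma=D_\sigma Q_m D_\sigma^{-1}$, the $\sigma$-orthogonal projection onto $D_\sigma(\tilde X_m)$, i.e.\ onto functions that on each $\Delta_i$ have the form $c_i e^{\sigma s}$. Concretely,
\[
[Q_m^\sigma x](s)=m\,e^{\sigma s}\!\int_{\Delta_i}e^{-\sigma t}x(t)\,\de t,\qquad s\in\Delta_i,
\]
whereas your $b_i$ is the weighted average $\bigl(\int_{\Delta_i}e^{-2\sigma t}\de t\bigr)^{-1}\int_{\Delta_i}e^{-2\sigma t}x\,\de t$, a different operator. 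Hence $(Q_m-Q_m^\sigma)x$ is \emph{not} piece-wise constant, the identity $a_i-b_i=\int_{\Delta_i}x\phi\,\de t$ with your $\phi$ is not what needs to be bounded, and the reduction to the single-interval inequality does not prove the stated lemma.

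The paper's proof proceeds instead from the pointwise identity
\[
[(Q_m-Q_m^\sigma)x](s)=m\,e^{\sigma s}\!\int_{\Delta_i}\bigl(e^{\sigma(t-s)}-1\bigr)e^{-\sigma t}x(t)\,\de t,
\]
uses $|e^{u}-1|\le 2|u|$ for $|u|\le 1$ (valid since $|\sigma(t-s)|\le \sigma/m\le 1$ by hypothesis), and then a single Cauchy--Schwarz step plus summation. Your Cauchy--Schwarz/single-parameter idea can be adapted to the correct $Q_m^\sigma$, but the ``same range'' observation---and with it the cancellation $\int_{\Delta_i}\phi=0$ that you highlight---is lost; the relevant smallness comes directly from $|e^{\sigma(t-s)}-1|\le 2\sigma/m$, which is the paper's route.
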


As a consequence of the preceding lemma we obtain

\begin{cor}\label{cor:accretive}
 Suppose that~$\sigma>0$ is chosen such that~$\qms F'(\xnull) \qms$ is
 accretive w.r.t. $\langle.,.\rangle_\sigma$. Then $Q_m F'(\xnull) Q_m +
 \frac{8\sigma\|\xnull\|}m \Id$ is accretive
 w.r.t. $\langle.,.\rangle_\sigma$. Consequently 
 \begin{equation*}
  \|(\alpha\Id + Q_m F'(\xnull) Q_m)\inv\|_\sigma\le \frac2{\alpha}
 \end{equation*}
 and
 \begin{equation*}
  \|(\alpha\Id + Q_m F'(\xnull) Q_m)\inv Q_m F'(\xnull) Q_m\|_\sigma\le 2
 \end{equation*}
 for $\alpha\ge \frac{16\sigma\|\xnull\|}m$.
\end{cor}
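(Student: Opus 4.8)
The plan is to reduce all three assertions to two elementary resolvent bounds for accretive operators, after first absorbing the non-orthogonality of $Q_m$ into a single additive correction term. Throughout I write $B := Q_m F'(\xnull) Q_m$ and work in the Hilbert space $L_2^\sigma(0,1)$ with inner product $\la\cdot,\cdot\ra_\sigma$, so that $\norm{\cdot}{\sigma}$ on operators means $\norm{\cdot}{\sigma\to\sigma}$. The first assertion is the genuine perturbation estimate; the remaining two are then purely formal.

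For the first assertion I would compare $B$ with the operator $\qms F'(\xnull)\qms$, which is accretive by hypothesis, by telescoping
\[
B - \qms F'(\xnull)\qms = Q_m F'(\xnull)(Q_m-\qms) + (Q_m-\qms)F'(\xnull)\qms =: R.
\]
Each summand is bounded in $\norm{\cdot}{\sigma\to\sigma}$ by combining three facts already at hand: $\norm{\qms}{\sigma\to\sigma}=1$ ($L_2^\sigma$-orthogonality), the bounds $\norm{Q_m}{\sigma\to\sigma}\le 1+2\sigma/m$ and $\norm{Q_m-\qms}{\sigma\to\sigma}\le 2\sigma/m$ from Lemma~\ref{lem:qm-qms}, and $\norm{F'(\xnull)}{\sigma\to\sigma}\le 2\norm{\xnull}{\sigma}\le 2\norm{\xnull}{}$ from Lemma~\ref{lem:frechet} together with~(\ref{norm:equiv}). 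This gives $\norm{R}{\sigma\to\sigma}\le 8\sigma\norm{\xnull}{}/m$ (under the running hypothesis $m\ge\sigma$ of Lemma~\ref{lem:qm-qms}, which absorbs the lower-order $O(\sigma^2/m^2)$ term into the constant). Since $\Re\la Rv,v\ra_\sigma\ge -\norm{R}{\sigma\to\sigma}\norm{v}{\sigma}^2$ and $\qms F'(\xnull)\qms$ is accretive, it follows that $B+\frac{8\sigma\norm{\xnull}{}}m\Id$ is accretive, as claimed.

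Next I would record, once and for all, the abstract fact that for any accretive operator $\tilde B$ on a Hilbert space and any $\lambda>0$, the operator $\lambda\Id+\tilde B$ is boundedly invertible with $\norm{(\lambda\Id+\tilde B)\inv}{}\le 1/\lambda$ and $\norm{\tilde B(\lambda\Id+\tilde B)\inv}{}\le 1$; both come from the single identity $\norm{f}{}^2=\lambda^2\norm{u}{}^2+2\lambda\Re\la\tilde Bu,u\ra+\norm{\tilde Bu}{}^2$ for $f=(\lambda\Id+\tilde B)u$, which yields simultaneously $\norm{f}{}\ge\lambda\norm{u}{}$ and $\norm{f}{}\ge\norm{\tilde Bu}{}$ (invertibility being clear since $B$ is finite rank, so injectivity plus the lower bound force bijectivity). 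I then apply this with $\tilde B:=B+\beta\Id$, $\beta:=8\sigma\norm{\xnull}{}/m$ (accretive by the previous step) and $\lambda:=\alpha-\beta$. For $\alpha\ge 16\sigma\norm{\xnull}{}/m=2\beta$ one has $\lambda\ge\alpha/2>0$ and $\alpha\Id+B=\lambda\Id+\tilde B$, so $\norm{(\alpha\Id+B)\inv}{\sigma}\le 1/\lambda\le 2/\alpha$. For the last bound, writing $B=\tilde B-\beta\Id$ and using that $\tilde B$ commutes with its own resolvent,
\[
(\alpha\Id+B)\inv B=\tilde B(\lambda\Id+\tilde B)\inv-\beta(\lambda\Id+\tilde B)\inv,
\]
whence $\norm{(\alpha\Id+B)\inv B}{\sigma}\le 1+\beta/\lambda\le 1+2\beta/\alpha\le 2$.

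The only delicate point is the perturbation estimate of the second paragraph: I must ensure the accretivity defect $\norm{R}{\sigma\to\sigma}$ is quantified so that the correction is exactly $\beta$ with $2\beta\le\alpha$ over the admissible range of $\alpha$, since this is precisely what makes the resolvent constants collapse to the clean $2/\alpha$ and $2$. I expect the main effort — and the place where the $m\ge\sigma$ regime is really needed — to be the bookkeeping of this constant; once the correction term is in place, the two quantitative bounds are immediate consequences of the accretive-resolvent inequalities above.
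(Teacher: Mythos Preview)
Your proposal is correct and takes essentially the same approach as the paper: telescope the difference $Q_m F'(\xnull)Q_m - \qms F'(\xnull)\qms$ into two terms, bound the defect via Lemmas~\ref{lem:qm-qms} and~\ref{lem:frechet}, and then reduce the two resolvent estimates to the standard accretive bounds $\norm{(\lambda\Id+\tilde B)^{-1}}{}\le 1/\lambda$ and $\norm{\tilde B(\lambda\Id+\tilde B)^{-1}}{}\le 1$ (which the paper cites from~\cite{MR1766765} rather than re-deriving). The only cosmetic difference is that the paper shifts by exactly $\alpha/2$, writing $\alpha\Id+B=\tfrac{\alpha}{2}\Id+G$ with $G:=B+\tfrac{\alpha}{2}\Id$ accretive, whereas you shift by the minimal correction $\beta=8\sigma\norm{\xnull}{}/m$; both parameterizations lead to the same final constants $2/\alpha$ and $2$.
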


Based on these preliminary results, we show that in the setting of Example~\ref{ex:fsigma-constant} the result of Theorem \ref{thm:main} can also
be obtained with the projections~$Q_m:=Q_m^0$ even for $\sigma>0$.

\begin{pro}\label{pro:main}
 Let $\sigma >0$ and $Q_m,\qms$ as in Lemma \ref{lem:qm-qms}.
 We denote the projection space corresponding to $Q_m$ by $X_m^0$. 
 For the noise level~$0< \delta\leq 1$ assume
 that we have~$m \delta\geq 1$. Then the Lavrent\cprime ev-regularized equation
 \begin{equation}\label{eq:reg0}
  \alpha Q_{m}(x_{\ast} - x) + Q_{m}\yd = Q_{m} F(Q_{m}x).
 \end{equation}
has a solution~$\xad\in
X_{m}^0$ with
$$
\norm{\xnull - \xad}{} \leq c \sqrt\delta,
$$
for a suitable parameter choice $\alpha=c\sqrt\delta$ 
with some constant~$c$ (independent of~$\delta$). 
\end{pro}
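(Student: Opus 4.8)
The plan is to rerun the fixed-point argument behind Proposition~\ref{pro:contraction} and Theorem~\ref{thm:main}, but now with the non-orthogonal projection $Q=\qm=Q_m^{0}$ in place of the $\sigma$-orthogonal $\qms$. The derivation of the map $G$ in~\eqref{eq:G-def} is purely algebraic and uses only that $Q$ is a projection; in particular $H_\alpha=\alpha\Id+\qm F'(\xnull)\qm$ still maps $X_m^0$ into itself. The sole place where orthogonality entered was in the resolvent estimates for $H_\alpha$, which rested on the accretivity of $\qms F'(\xnull)\qms$. This is exactly what Corollary~\ref{cor:accretive} repairs: since $\qm F'(\xnull)\qm+\frac{8\sigma\norm{\xnull}{}}{m}\Id$ is accretive, one has $\norm{H_\alpha\inv}{\sigma}\le 2/\alpha$ and $\norm{H_\alpha\inv \qm F'(\xnull)\qm}{\sigma}\le 2$ as soon as $\alpha\ge 16\sigma\norm{\xnull}{}/m$. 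These are the bounds from the orthogonal case up to a harmless factor two, so the remaining work is bookkeeping.

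First I would check admissibility of the parameter choice. Under $m\delta\ge 1$ and $\delta\le 1$ we have $1/m\le\delta\le\sqrt\delta$, hence $16\sigma\norm{\xnull}{}/m\le 16\sigma\norm{\xnull}{}\sqrt\delta$, so taking the constant $c$ in $\alpha=c\sqrt\delta$ at least $16\sigma\norm{\xnull}{}$ secures the constraint of Corollary~\ref{cor:accretive}; for $\delta$ small one also has $m\ge\sigma$, so Lemma~\ref{lem:qm-qms} gives $\norm{\qm}{\sigma\to\sigma}\le 1+2\sigma/m$, a uniformly bounded factor. I would fix $\sigma$ (via Lemma~\ref{lem:suumary-sigma}) large enough that $F'(\xnull)$ is accretive on $L_2^\sigma(0,1)$ and simultaneously $\norm{\omega}{\sigma}<1/4$, where $\omega$ solves~\eqref{eq:volterra}. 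Then I would estimate $\norm{G(\qm x)-\xnull}{\sigma}$ term by term on a ball $B(\xnull,r)\cap X_m^0$: the projection defect $\qm F'(\xnull)(\xnull-\qm\xnull)$ through the $\sigma=0$ approximation power established in Example~\ref{ex:fsigma-constant} (order $1/m\le\delta$); the data term $\qm(\yd-y_0)$ through Assumption~\ref{ass:noise}\eqref{ass:1}; the quadratic remainder $\qm F(\qm x-\xnull)$ through $\norm{F(z)}{\sigma}\le\norm{z}{\sigma}^2$ from Lemma~\ref{lem:frechet}, giving an $O(r^2)$ contribution.

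The reference term is the delicate one, and I expect it to be the main obstacle. The naive bound $\norm{H_\alpha\inv\alpha(x_\ast-\xnull)}{\sigma}\le 2\norm{x_\ast-\xnull}{\sigma}$ is useless, since $x_\ast\approx\xnull(0)$ is constant while $\xnull$ is not. The fix is to use $\xnull=\xnull(0)+F'(\xnull)\omega$ from~\eqref{eq:volterra} (here $\qm x_\ast=x_\ast$, as $x_\ast$ is constant and lies in $X_m^0$) and split $x_\ast-\xnull=(x_\ast-\xnull(0))-F'(\xnull)\omega$: the first piece is $O(\sqrt\delta)$ by Proposition~\ref{pro:x0strich-appr} (precisely where $m\ge 1/\delta$ is needed to define $x_\ast$), while for the second I would decompose $F'(\xnull)\omega=\qm F'(\xnull)\qm\omega+(\Id-\qm)F'(\xnull)\omega+\qm F'(\xnull)(\Id-\qm)\omega$, bounding the leading term via $\norm{\alpha H_\alpha\inv \qm F'(\xnull)\qm}{\sigma}\le 2$ and $\norm{\omega}{\sigma}<1/4$ (so it is $<\alpha/2$) and the two defects by the $O(1/m)$ approximation power applied to the smooth functions $\xnull$ and $\omega$. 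Collecting everything yields $\norm{G(\qm x)-\xnull}{\sigma}\le C_1\sqrt\delta+C_2 r^2$ with $C_1,C_2$ independent of $\delta$, so a ball of radius $r=\Theta(\sqrt\delta)$ is mapped into itself once $\delta$ is small. Since $G$ is continuous and $B(\xnull,r)\cap X_m^0$ is convex, closed and bounded in the finite-dimensional space $X_m^0$, Brouwer's theorem gives a fixed point $\xad$ with $\norm{\xnull-\xad}{\sigma}\le r$. Finally I would pass to the standard norm via~\eqref{norm:equiv}, $\norm{\xnull-\xad}{0}\le e^{\sigma}\norm{\xnull-\xad}{\sigma}\le c\sqrt\delta$, with $\sigma$ fixed, which is the claim.
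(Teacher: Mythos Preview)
Your overall strategy is exactly the paper's, but there is a genuine quantitative gap in the closing step. You assert that ``collecting everything yields $\norm{G(\qm x)-\xnull}{\sigma}\le C_1\sqrt\delta+C_2 r^2$ with $C_1,C_2$ independent of~$\delta$''. This is false for $C_2$: the quadratic term is $I_3=\norm{H_\alpha\inv \qm F(\qm x-\xnull)}{\sigma}\le \frac{2}{\alpha}\norm{\qm}{\sigma\to\sigma}\,r^2$, so the coefficient of $r^2$ carries the factor $1/\alpha=1/(c\sqrt\delta)$ and blows up as $\delta\to 0$. Hence the self-map condition does \emph{not} close ``once $\delta$ is small''; one must apply Lemma~\ref{lem:quadrat} with $a=2\alpha^{-1}\norm{\qm}{\sigma\to\sigma}$ and check $4ab<1$. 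The leading ($\delta$-independent) part of $4ab$ is
\[
4\cdot\frac{2\norm{\qm}{\sigma\to\sigma}}{\alpha}\cdot 2\alpha\norm{\omega}{\sigma}
=16\,\norm{\qm}{\sigma\to\sigma}\,\norm{\omega}{\sigma},
\]
where the two extra factors of $2$ come precisely from Corollary~\ref{cor:accretive} (the ones you called ``harmless''). Under your hypothesis $\norm{\omega}{\sigma}<1/4$ this quantity is at least $4\norm{\qm}{\sigma\to\sigma}\ge 4>1$, so Lemma~\ref{lem:quadrat} does not apply and no self-mapping radius exists.

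The fix is what the paper does: choose $\sigma_1$ via Lemma~\ref{lem:suumary-sigma} so that $\norm{\omega}{\sigma_1}<1/48$; combined with $\norm{\qm}{\sigma_1\to\sigma_1}\le 3$ from Lemma~\ref{lem:qm-qms} (for $m\ge\sigma_1$) this gives $16\,\norm{\qm}{\sigma_1\to\sigma_1}\norm{\omega}{\sigma_1}<1$, and then the remaining terms can indeed be made small by taking $c$ large in $\alpha=c\sqrt\delta$. Everything else in your sketch (the decomposition of the reference term via $\omega$, the use of Proposition~\ref{pro:x0strich-appr}, the admissibility check $\alpha\ge 16\sigma\norm{\xnull}{}/m$, and the finite-dimensional fixed-point argument) is correct and matches the paper; only the smallness threshold on $\norm{\omega}{\sigma}$ needs tightening.
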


The proofs of the previous results, Lemma~\ref{lem:qm-qms},
Corollary~\ref{cor:accretive}, and Proposition~\ref{pro:main} are
given in the appendix.

\section{Numerical simulation}
\label{sec:numerics}

Here we are going to highlight the validity of the theoretical
findings. Also, we want to discuss that some of the theoretical
limitations cannot be seen in practical simulations. This concerns the
assumption of the accretivity, i.e., when we require $\sigma$ to be
chosen according to Lemma~\ref{lem:suumary-sigma}.

\subsection{An explicit solver}

We shall show that the discretization described in
Example~\ref{ex:fsigma-constant} leads to a convenient explicit scheme
for solving the discretized equation~(\ref{eq:Qbase}) with~$Q_{m}:=
\qms$. Thus, we fix~$\sigma>0$, and $m\in\bbN$, and we consider the
orthogonal projection~$\qms$ from Example~\ref{ex:fsigma-constant}.


First we simplify the operator $\qms\circ F\circ \qms$. Therefore we define
for  $x\in L_2(0,1)$ , and for~$1\le i\le m$, the quantities
\begin{align*}
f_i(s)& :=
\begin{cases}
e^{\sigma s}, & \text{ for }\frac{i-1}m \le s \le  \frac
  im,\\
0,& \text{ else},
\end{cases}\\
\intertext{and}
x^i& := m \int_{\frac{i-1}m}^{\frac im} e^{-\sigma t}x(t)\de t.
  \end{align*}
In these terms we see that
\[
\qms x = \sum_{i=1}^m x^i f_i,\quad x\in L_{2}(0,1).
\]
We identify $\mathcal R(\qms)$ with a vector in $\bbR^m$ through the bijection
\[ 
x\longrightarrow (x^i)_{1\le i\le m}.
\]
Furthermore, let
\[
h_i(s):=
\begin{cases} 
\frac{i-1}m + s & \text{ for } \frac{i-1}m\le
  s < \frac im\\ \frac{i+1}m - s & \text{ for } \frac{i}m\le s < \frac
  {i+1}m\\0&\text{ else}
\end{cases}
\]
Then for $x\in L_2(0,1)$ we have
\begin{align*}
 (\qms\circ F\circ \qms)(x) =& (\qms\circ F)\left(\sum_{i=1}^m x^i f_i\right)\\
 =& \qms\left(\sum_{i=1}^m \sum_{j=1}^{m-i+1}x^i x^j f_i*f_j\right)\\
 =& \qms\left(\sum_{k=1}^m \sum_{j=1}^{k}x^{k-j+1} x^j(f_{k} + f_{k+1}) h_{k}\right)\\
 =& \frac1{2m} \sum_{k=1}^m \sum_{j=1}^{k} x^{k-j+1} x^j (f_{k} + f_{k+1}).
\end{align*}

Now we can write equation~(\ref{eq}) component-wise as follows.
For $i=1$ we find that $x^{1}$ must satisfy the quadratic equation
$$
\lr{x^{1}}^{2} + 2 m \alpha x^{1} - \lr{2m \lr{\yd}^{1} + 2 m \alpha
  x_{\ast}^{1}} = 0.
$$
Note that $(y^\delta)^i\ge 0$ for all $i=1,\dots,n$,
since we can assume that $y^\delta\ge 0$ (see remark
\ref{rem:data-nonneg}).
The non-negative solution is
\begin{equation}\label{xone}
x^1=-m\alpha +\sqrt{m^2 \alpha^2 + 2m((\yd)^1 + \alpha x_*^1)}.
\end{equation}
For $i=2,\dots,m$ we find that
\begin{equation}\label{eqcomp}
\alpha (x_*^i-x^i) + (\yd)^i - \frac1{2m}\left( \sum_{j=1}^{i}
  x^{i-j+1} x^j + \sum_{j=1}^{i-1} x^{i-j} x^j \right)=0. 
\end{equation}
This can be considered as a linear equation in $x^i$, if all $x_j$ for $j<i$ are already
determined. Rearrangement yields 
\begin{equation}\label{eqre}
 x^i = \frac m{m\alpha + x^1}\left((\yd)^i + \alpha x_*^i - \frac 1 {2m}\left(\sum_{j=1}^{i-1} x^j x^{i-j} + \sum_{j=2}^{i-1} x^j x^{i-j+1}\right)\right).
\end{equation}
Thus we have obtained the piece-wise constant approximation~$\qms x$,
and we now apply \emph{post-smoothing} by a cubic spline. By doing
this appropriately we can retain approximation rate $\frac12$.
Indeed, consider a cubic spline~$S \in S_{4}^{m}$, and let
~$x_\alpha^\delta$ be the above piece-wise constant approximation.
The triangle inequality yields
\begin{equation*}
 \norm{S-x_\alpha^\delta}{} \le \norm{S-x_0}{} + \norm{x_0 - x_\alpha^\delta}{}
\end{equation*}
By the main result, the second summand above  is of order
$\sqrt\delta$. On the other hand, the solution~$\xnull$ can be
approximated by a cubic spline at the rate~$\frac1{m^2}$ (see
\cite[Corollary 6.21]{MR2348176}). 
Hence we can have the left hand side of order $\sqrt\delta$ by
choosing $m\ge\delta^{-\frac14}$.

Thus, for given~$\sigma>0$, $m\in\bbN$, and parameter~$\alpha>0$, this
results in the following simple algorithm to compute a solution 
to~(\ref{eq}), which  requires~$O(n^2)$ operations.
\begin{figure}[ht]
  \centering
  \caption{Description of the algorithm}
\begin{description}
\item[ Input] Data~$\yd$, reference element~$x_{\ast}$.
\item[  Init] Compute~$x^{1}$ from~(\ref{xone}).
\item[  Iter] For $i=2,\dots,m$ solve~(\ref{eqre}) to
  obtain~$x^{2},\dots,x^{m}$.
\item[Smooth] Approximate the function~$\hat f:= \sum_{i=1}^{m} x^{i}f_{i}$ by
  a cubic smoothing spline~$S(\hat f) \in \mathcal S_{4}(\Delta_{m})$.
\item[Output] spline~$S(\hat f) $.
 \end{description}  
  \label{fig:algorithm}
\end{figure}

\subsection{Simulation study}
\label{sec:simulationstudy}

The setup is according to the theoretical study. We fix some
function~$\xnull\colon [0,1]\to \bbR^{+}$, and then we generate data~$\yd= F(\xnull) +
\delta\xi$ on a fine grid (meshsize= $5000$), where $\|\xi\|_\infty = 1$.
For different values of~$\delta=0.04,0.0025$ we
let~$\alpha=\sqrt\delta$.
The theoretical results were based on the accretivity assumption, and
we shall perform simulations, both for $x_0$
that satisfies (1) in Lemma \ref{lem:suumary-sigma}, but also
for $x_0$ violating the accretivity assumption for $\sigma=0$.
The spaces~$\tilde X_{m}$ are chosen both, 
as piece-wise constant functions, and cubic splines, respectively, and
on a grid with  discretization 
level~$m \geq \lceil \frac{1}{\delta}\rceil$ for piece-wise constant functions
and $m \geq \lceil \sqrt[4]{\frac{20}\delta}\rceil$ for cubic splines. 

We then show the reconstructions, for piece-wise constant ansatz
functions, without( left panel) and with (middle panel) post-smoothing
by cubic spline interpolation. We add the 
corresponding reconstructions with cubic splines (right panel).
For the first and the last case we obtain a convergence rate of
order $\frac12$ from the theory (cf.~Theorem~\ref{thm:main}).

\subsection{Accretivity for~$\mathbf {\sigma=0}$}
\label{sec:sigma0}

Here we let
\begin{equation}
  \label{eq:f1}
\xnull(t) = t^{2} -2t + 2,\quad 0\leq t\leq 1.  
\end{equation}

This function is positive, decreasing and convex on the unit interval,
and hence~$\sigma=0$ is appropriate for the
accretivity. Figure~\ref{fig:first-delta04} shows the reconstructions
for the noise level~$\delta=0.04$, whereas
Figure~\ref{fig:first-delta0025} for~$\delta=0.0025$.
\begin{figure}[ht]
  \caption{Simulations for~$\delta=0.04$}
  \centering
   \includegraphics[width= 0.3\textwidth]{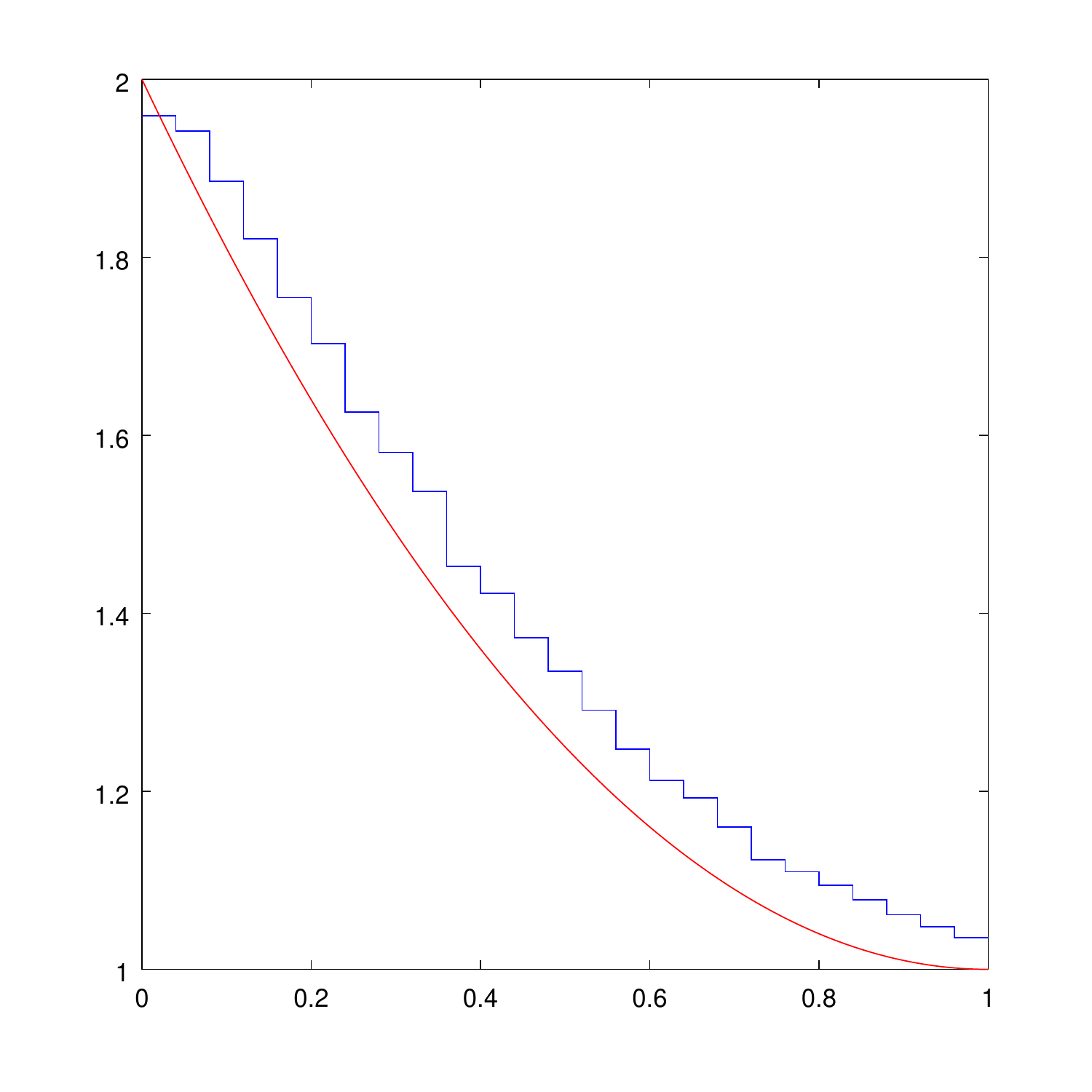}
   \includegraphics[width= 0.3\textwidth]{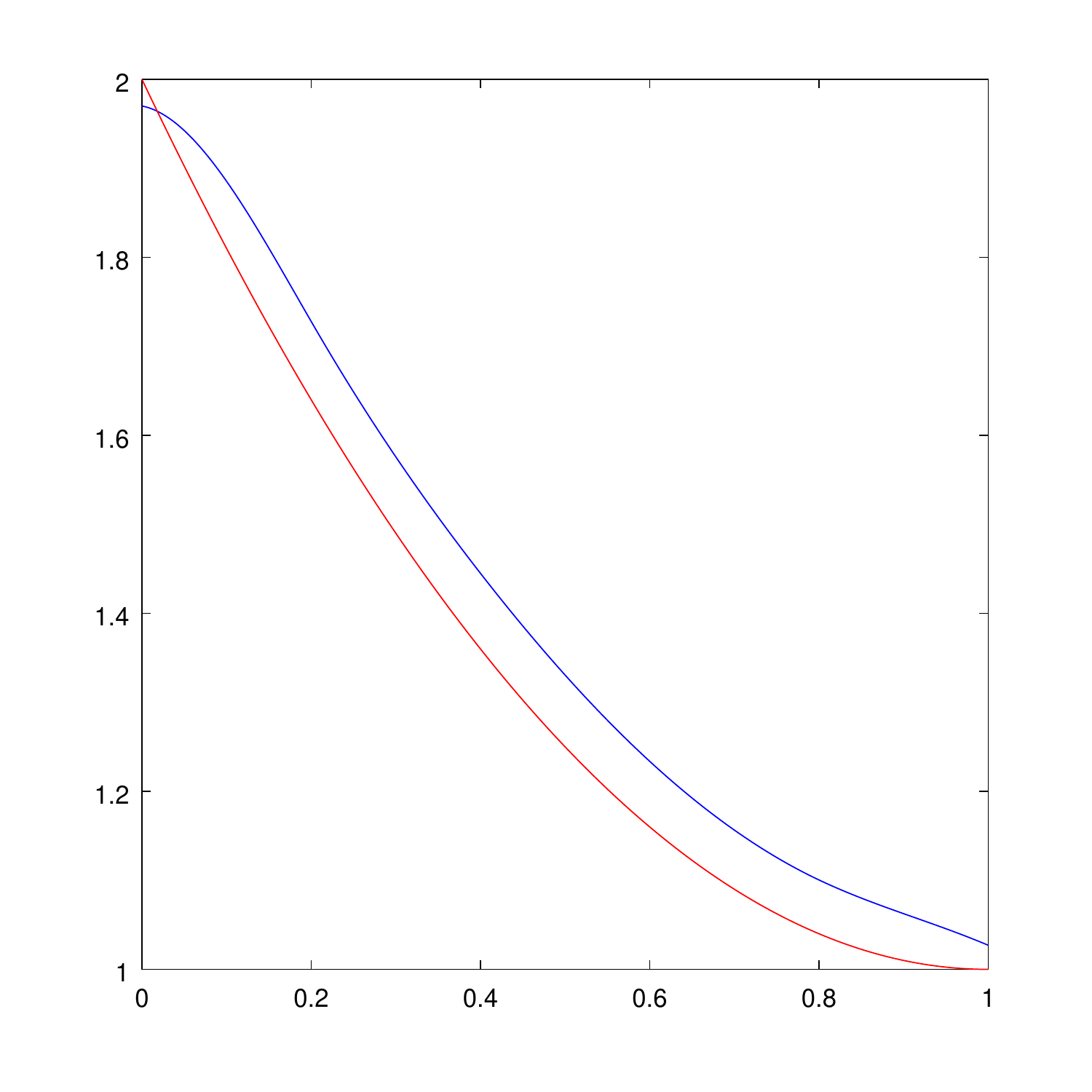}
   \includegraphics[width= 0.3\textwidth]{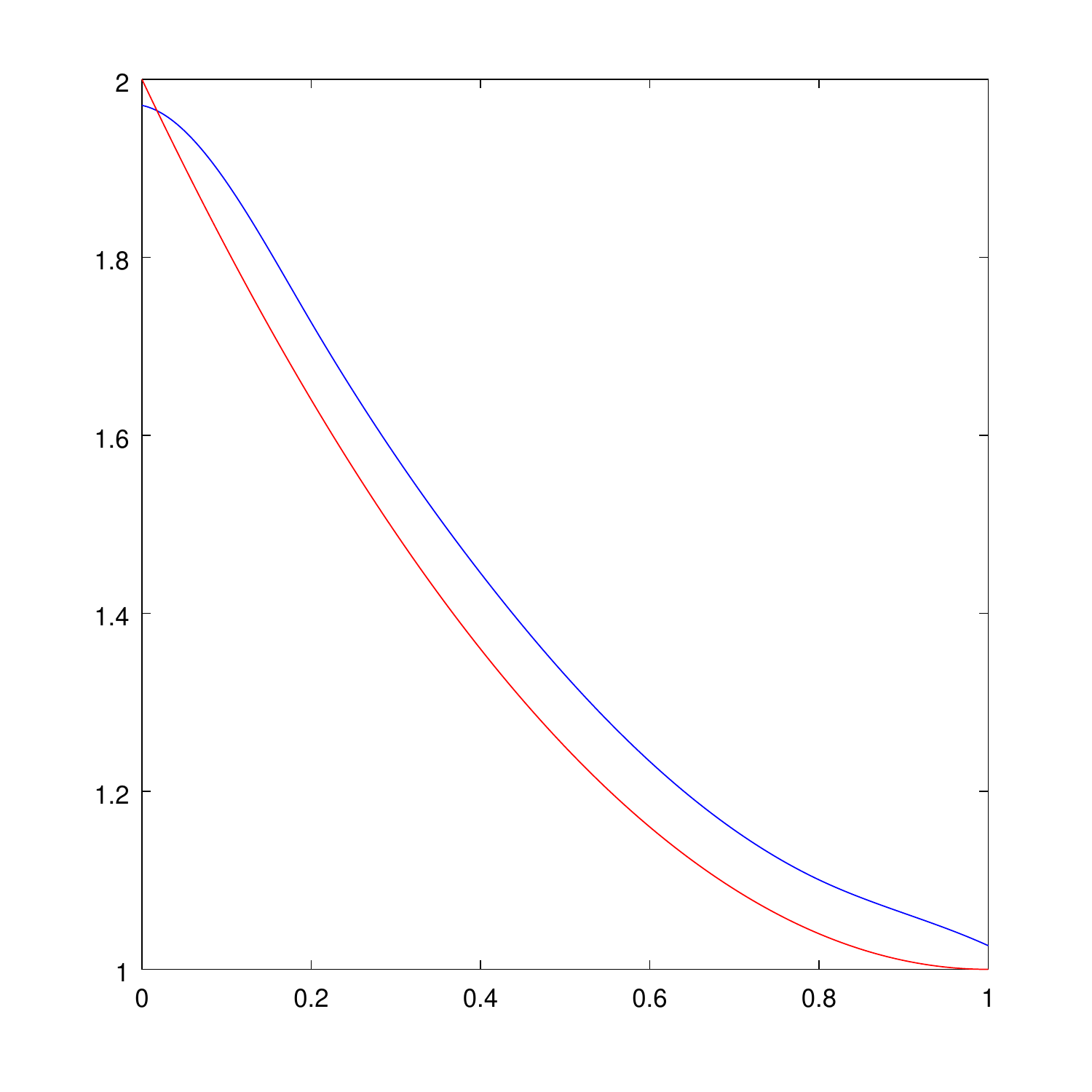}
  \label{fig:first-delta04}
\end{figure}
\begin{figure}[ht]
  \caption{Simulations for~$\delta=0.0025$}
  \centering
   \includegraphics[width= 0.3\textwidth]{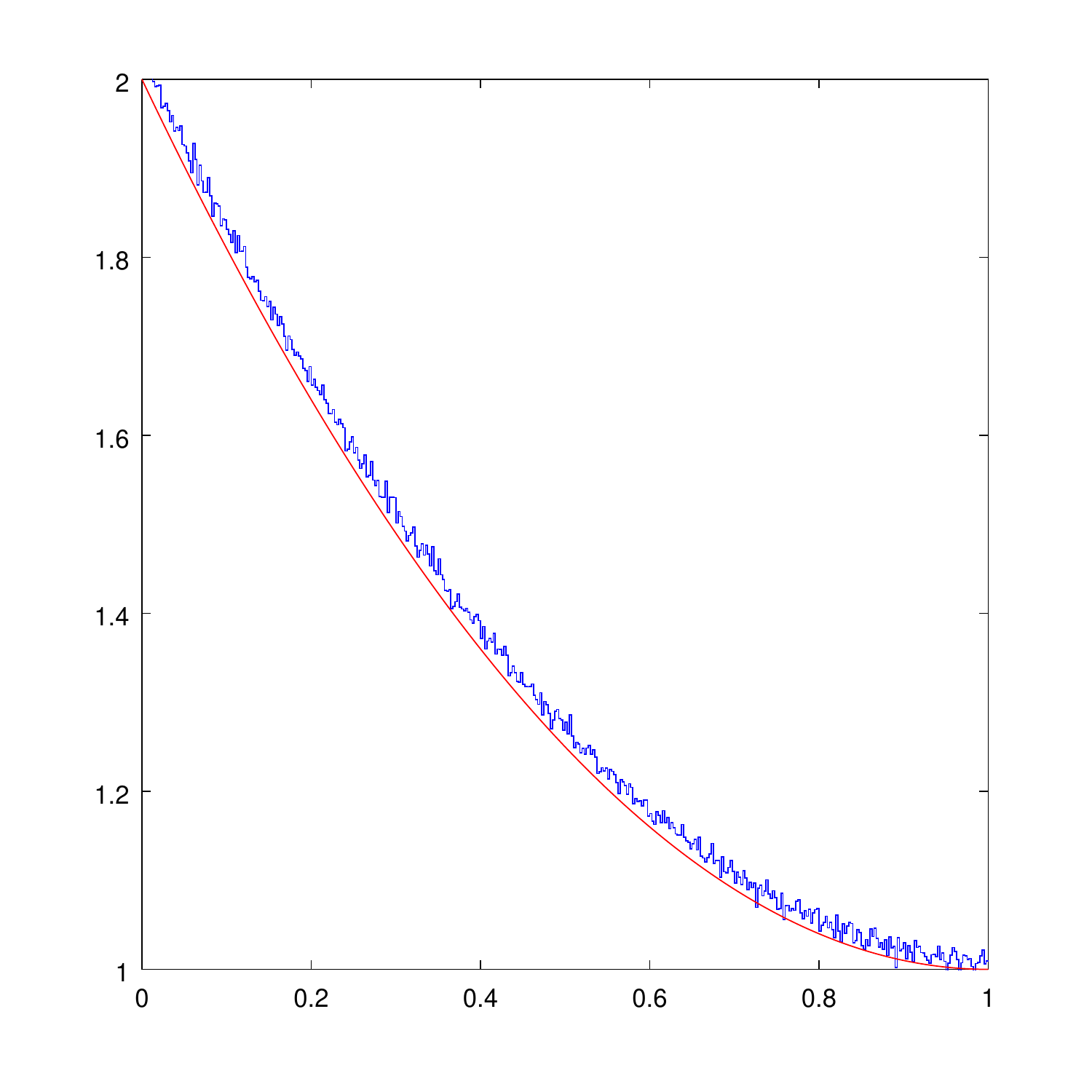}
   \includegraphics[width= 0.3\textwidth]{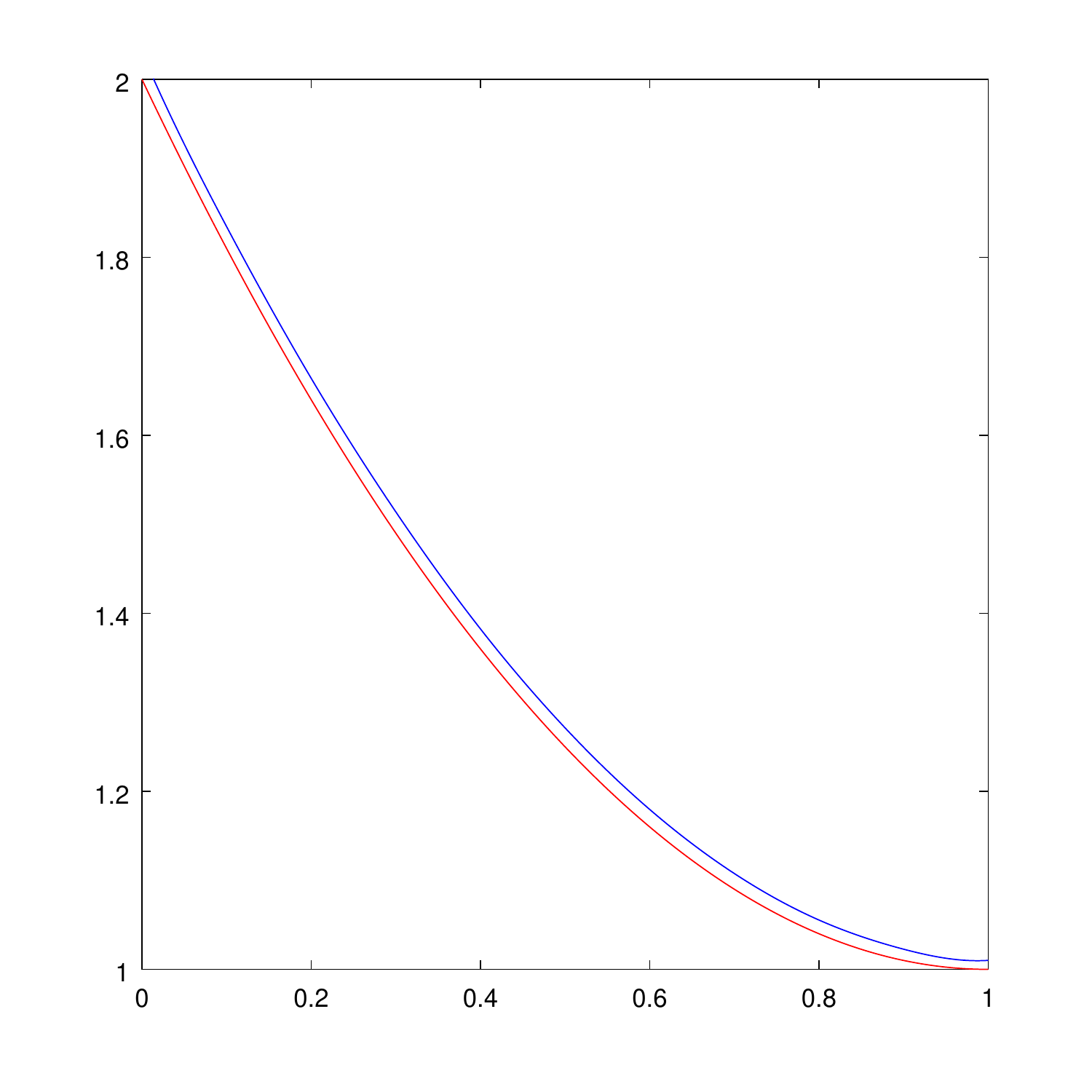}
   \includegraphics[width= 0.3\textwidth]{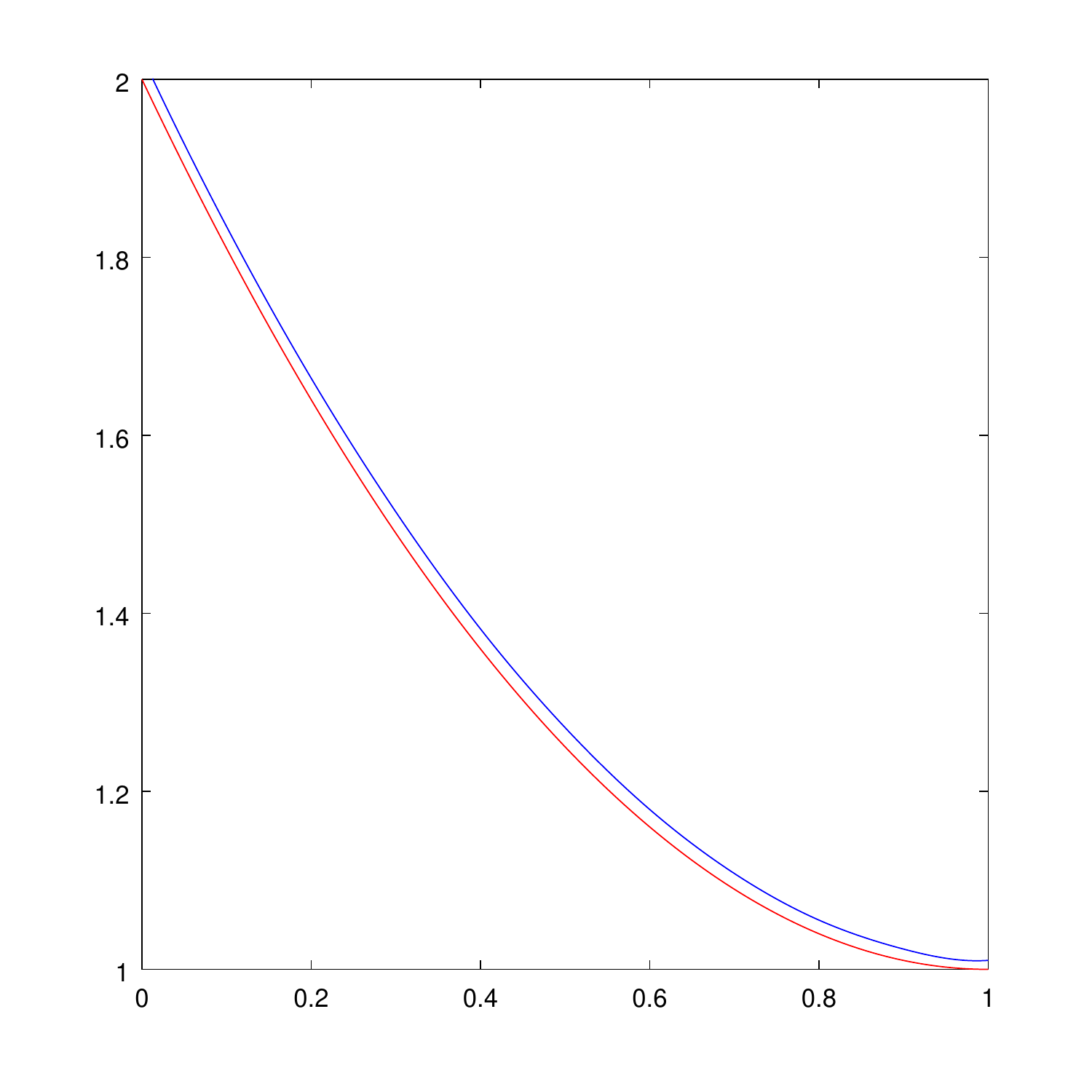}
  \label{fig:first-delta0025}
\end{figure}

\begin{table}[h]
\caption{computation times for function~$\xnull$ from~(\ref{eq:f1})}
\begin{tabular}{r r r r}\toprule
 data error&approximation space&$m$& cpu-time [s]\\\addlinespace\midrule
 $\delta = 0.0025$&piece-wise constant&$400$&$1.68$\\
 &cubic splines&$10$&$1076$\\\midrule
 $\delta = 0.04$&piece-wise constant&$25$&$0.173$\\
 &cubic splines&$5$&$338$\\\bottomrule
\end{tabular}
\label{tab:cpu-f1}
\end{table}

The computation times for the different methods are shown in Table~\ref{tab:cpu-f1}.
Since the time for smoothing the piece-wise constant solution is negligible, the corresponding computation times are not listed here.

Figure \ref{fig:rate1} shows a log-log plot of reconstruction errors of the different reconstruction methods depending on the noise level.

\begin{figure}[ht]
 \caption{$-\ln(\|x_\alpha^\delta-\xnull\|)$ versus $-\ln(\delta)$ for
   piece-wise constant (red),  smoothed piece-wise constant (blue),  and
   cubic splines (black) ansatz} 
 \centering
 \includegraphics[width=5in,height=3in]{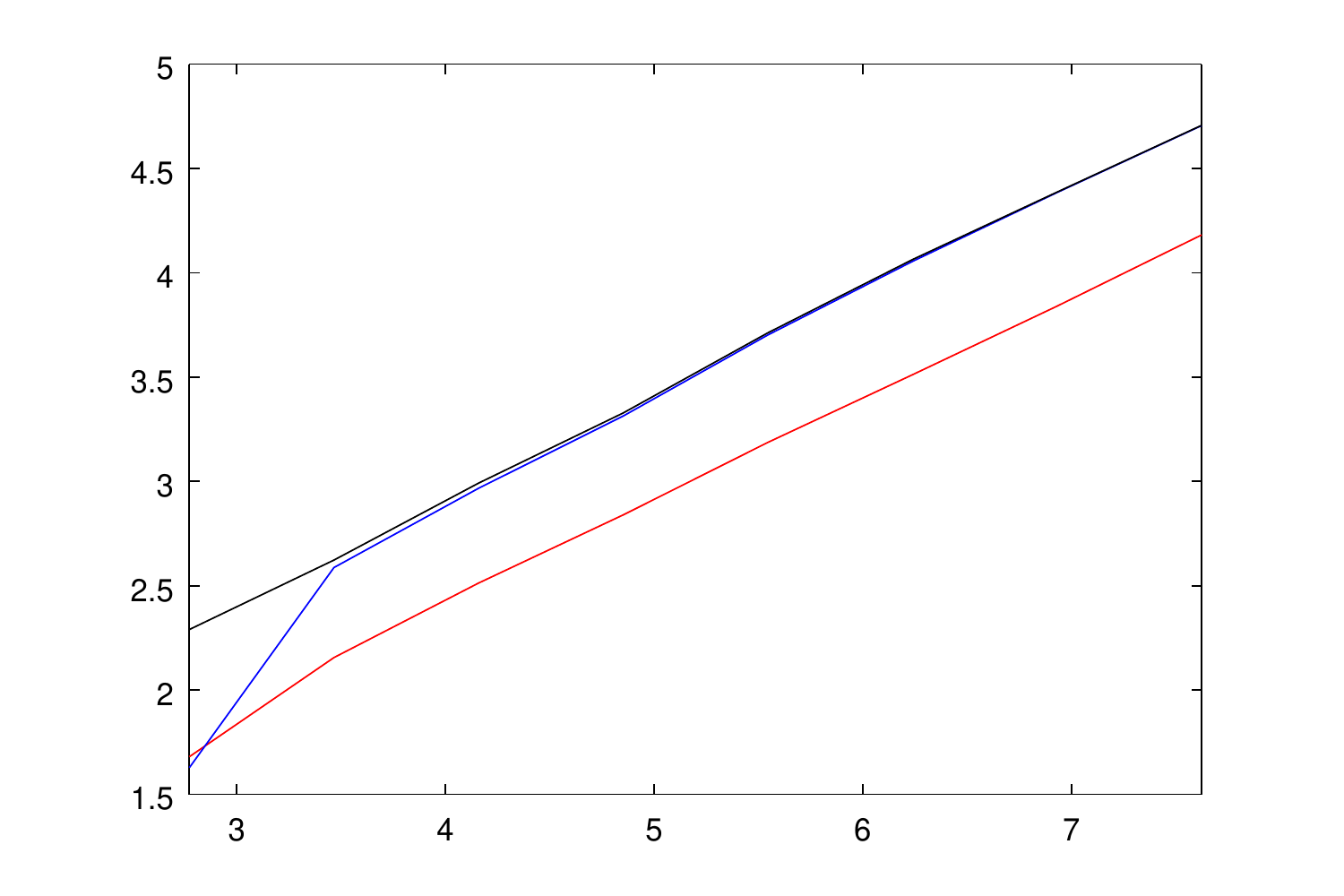}
 \label{fig:rate1}
\end{figure}

We observe that we obtain acceptable reconstructions from all three methods,
where post smoothing of the piece-wise constant reconstruction and cubic spline ansatz yield almost identical results.
Since the computation time for the piece-wise constant ansatz is much
lower, we recommend this method. 
In Figure \ref{fig:rate1} we see that all three methods have a numerical convergence rate of approximately $\frac12$,
where the curves for the last two methods are almost identical for small data error.
\subsection{Accretivity for positive $\sigma$}
\label{sec:sigmapositive}

We consider the function
\begin{equation}
  \label{eq:f2}
\xnull(t) = 2 + \cos(4\pi t),\quad 0\leq t\leq 1.  
\end{equation}
 
The following Figures~\ref{fig:sigma-positive-delta04} \&
\ref{fig:sigma-positive-delta0025}, show the reconstructions.
\begin{figure}[ht]
  \caption{Simulations for~$\delta=0.04$}
  \centering
   \includegraphics[width= 0.3\textwidth]{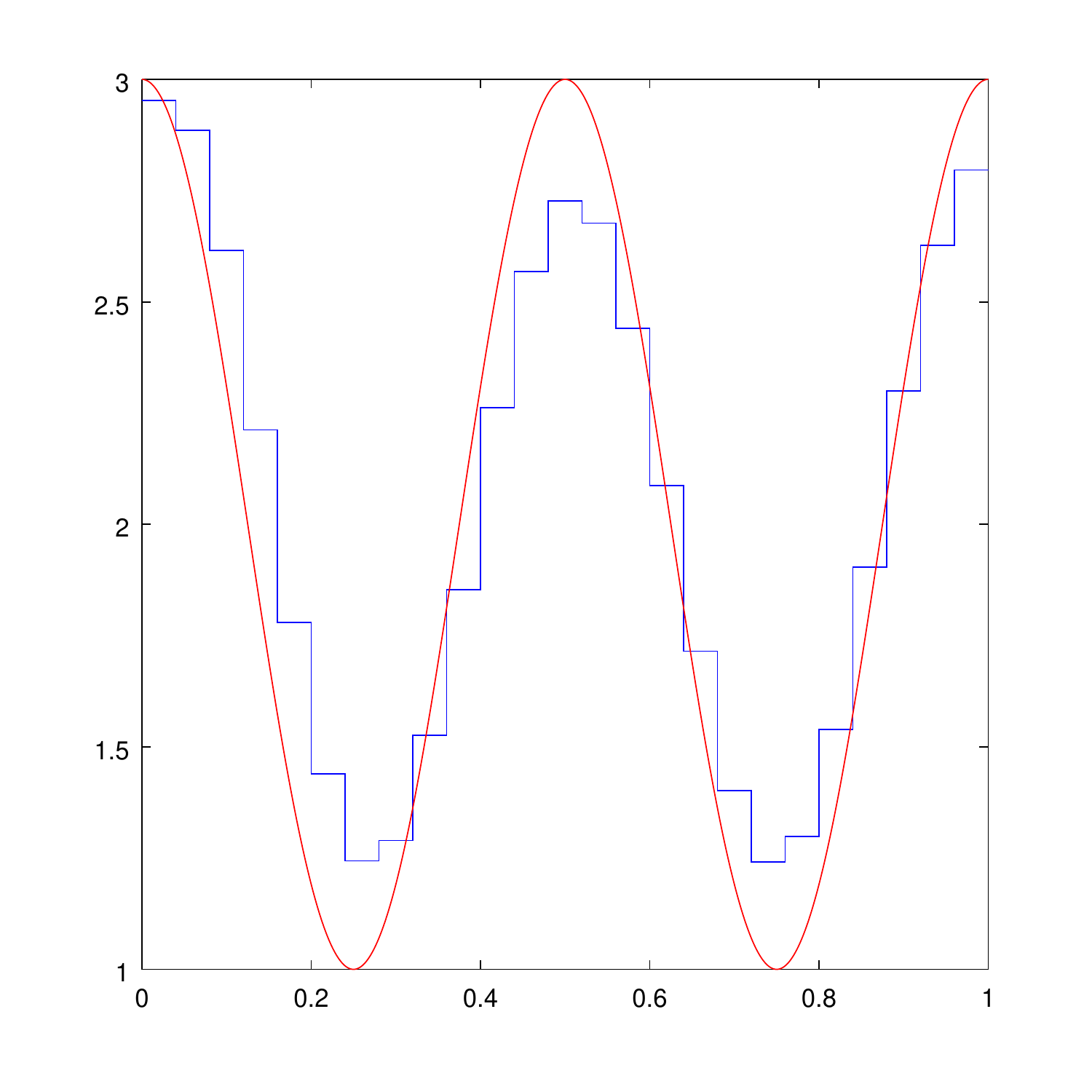}
   \includegraphics[width= 0.3\textwidth]{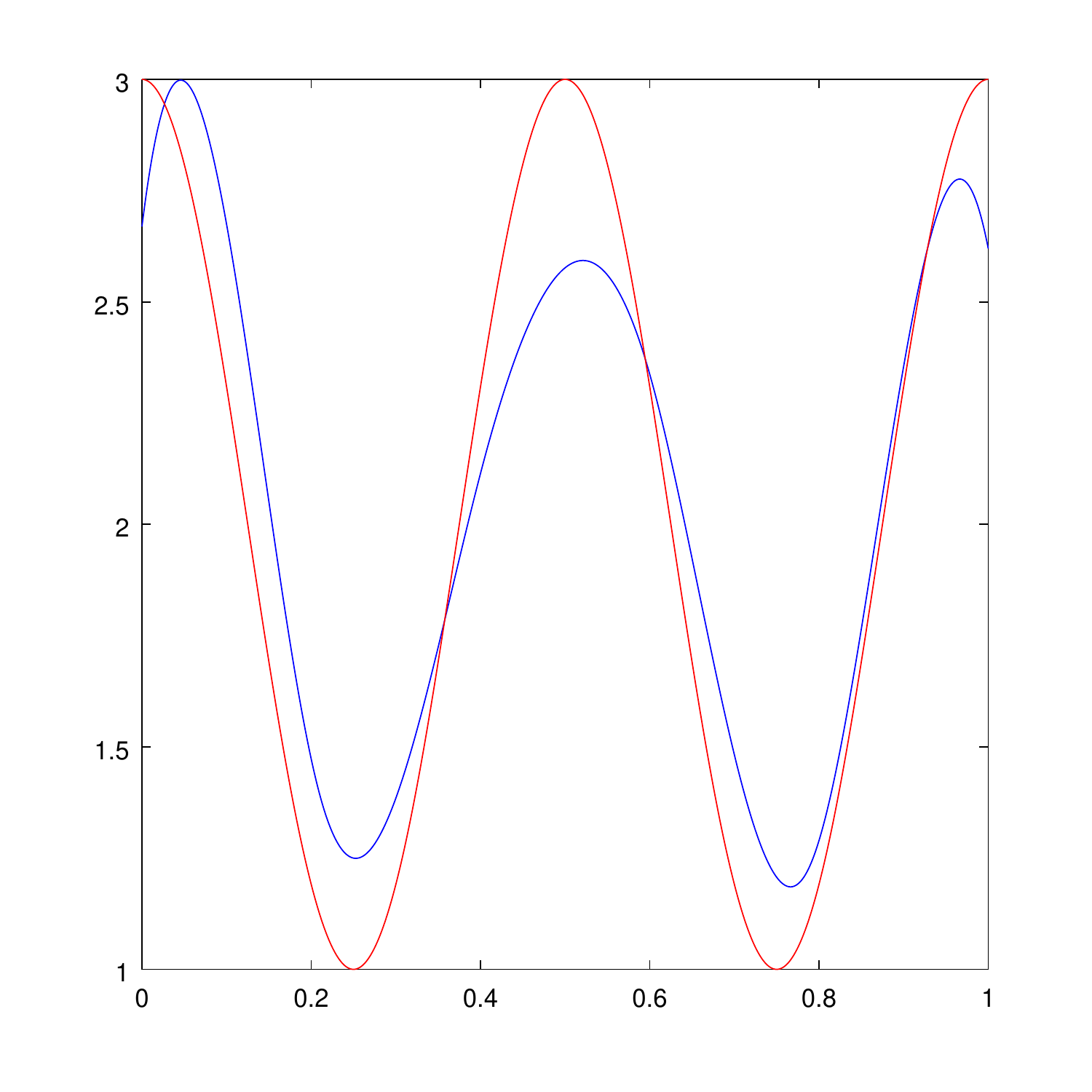}
   \includegraphics[width= 0.3\textwidth]{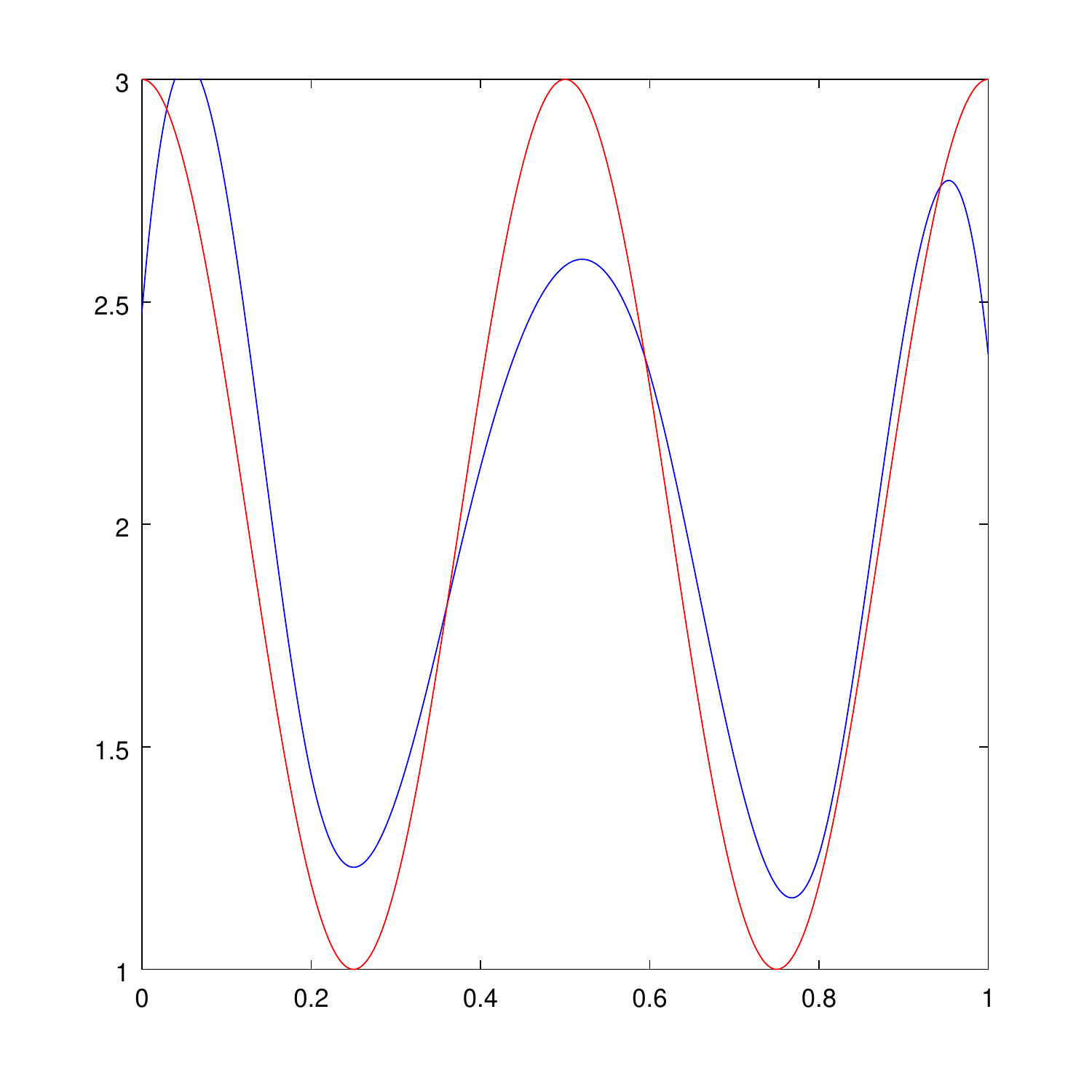}
  \label{fig:sigma-positive-delta04}
\end{figure}
\begin{figure}[h]
  \caption{Simulations for~$\delta=0.0025$}
  \centering
   \includegraphics[width= 0.3\textwidth]{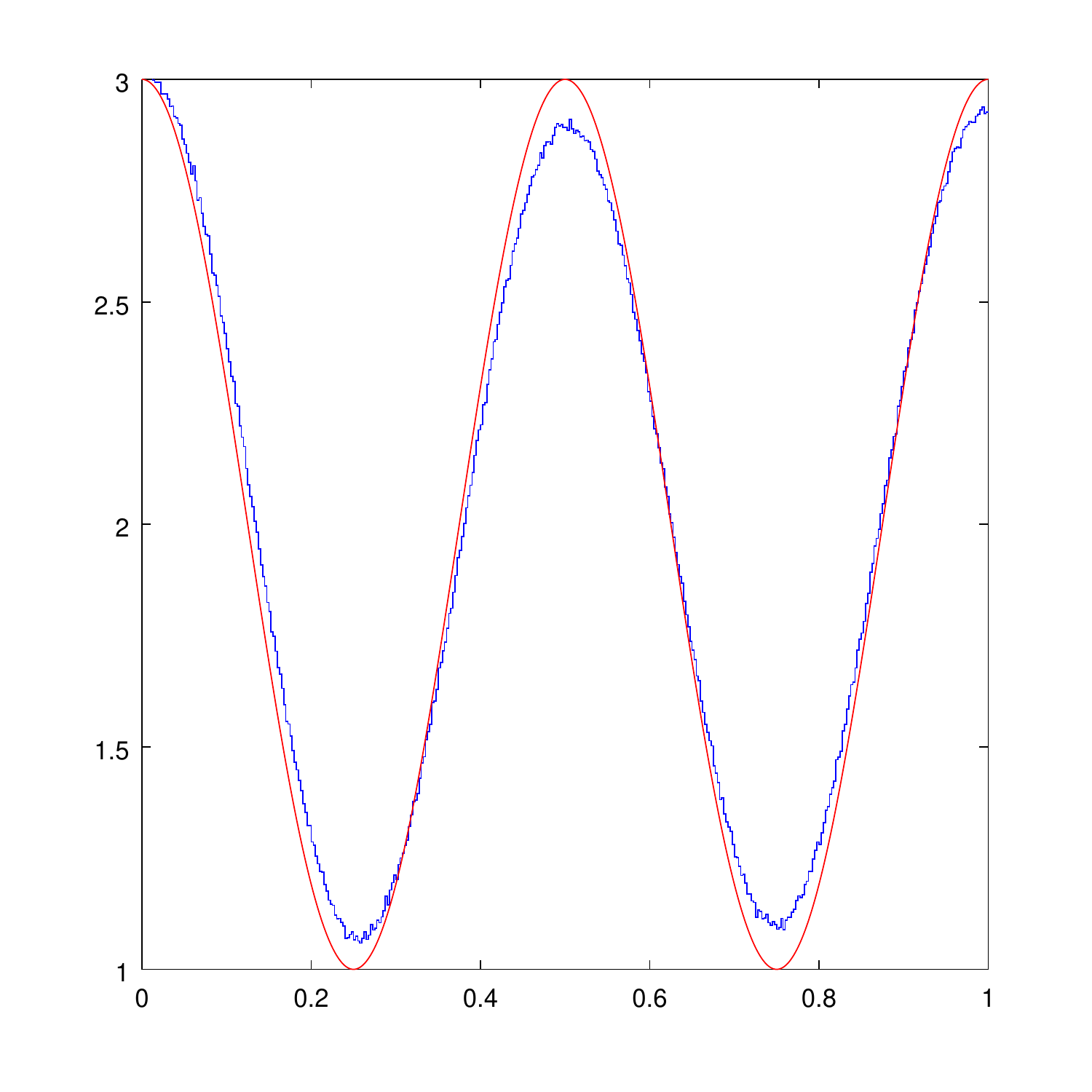}
   \includegraphics[width= 0.3\textwidth]{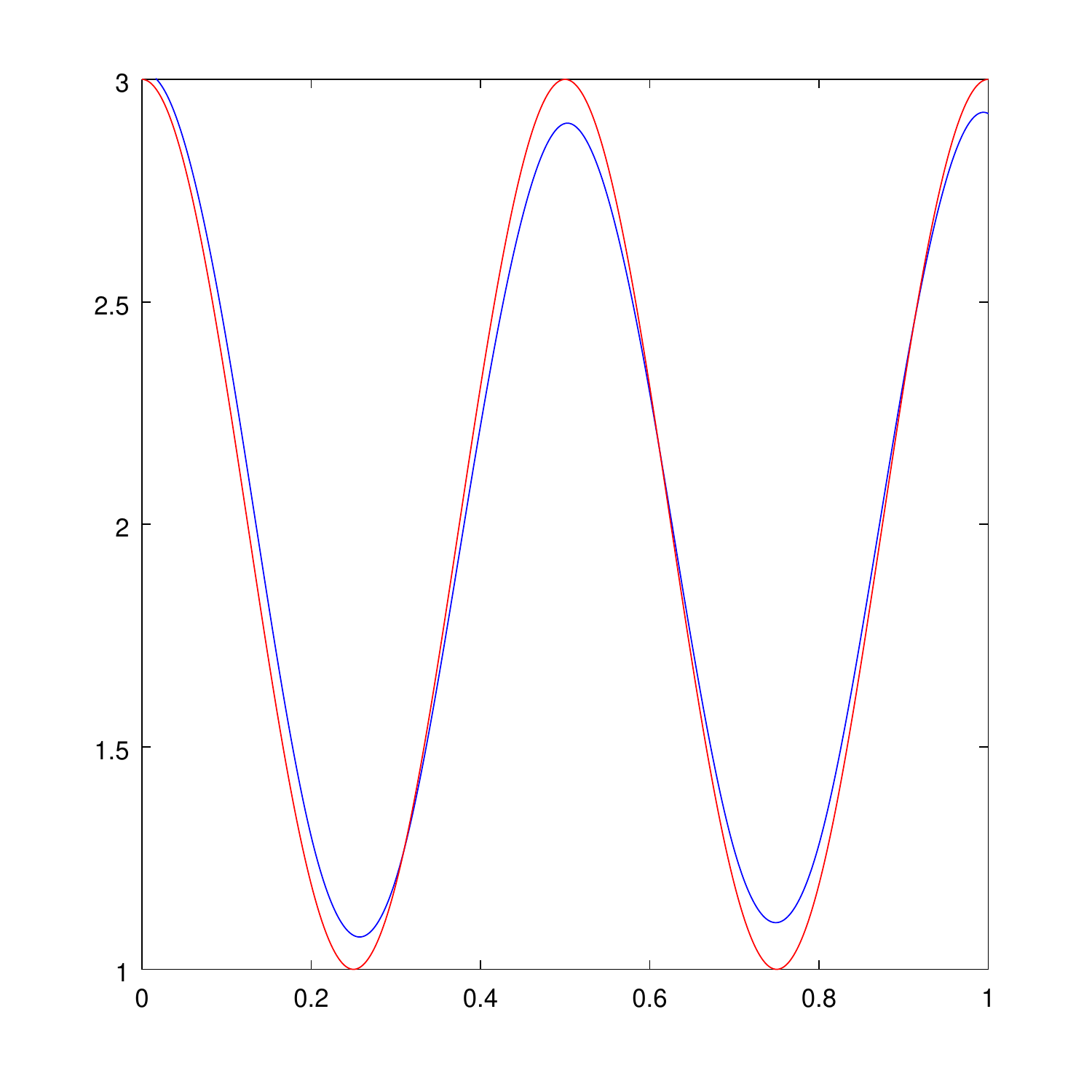}
   \includegraphics[width= 0.3\textwidth]{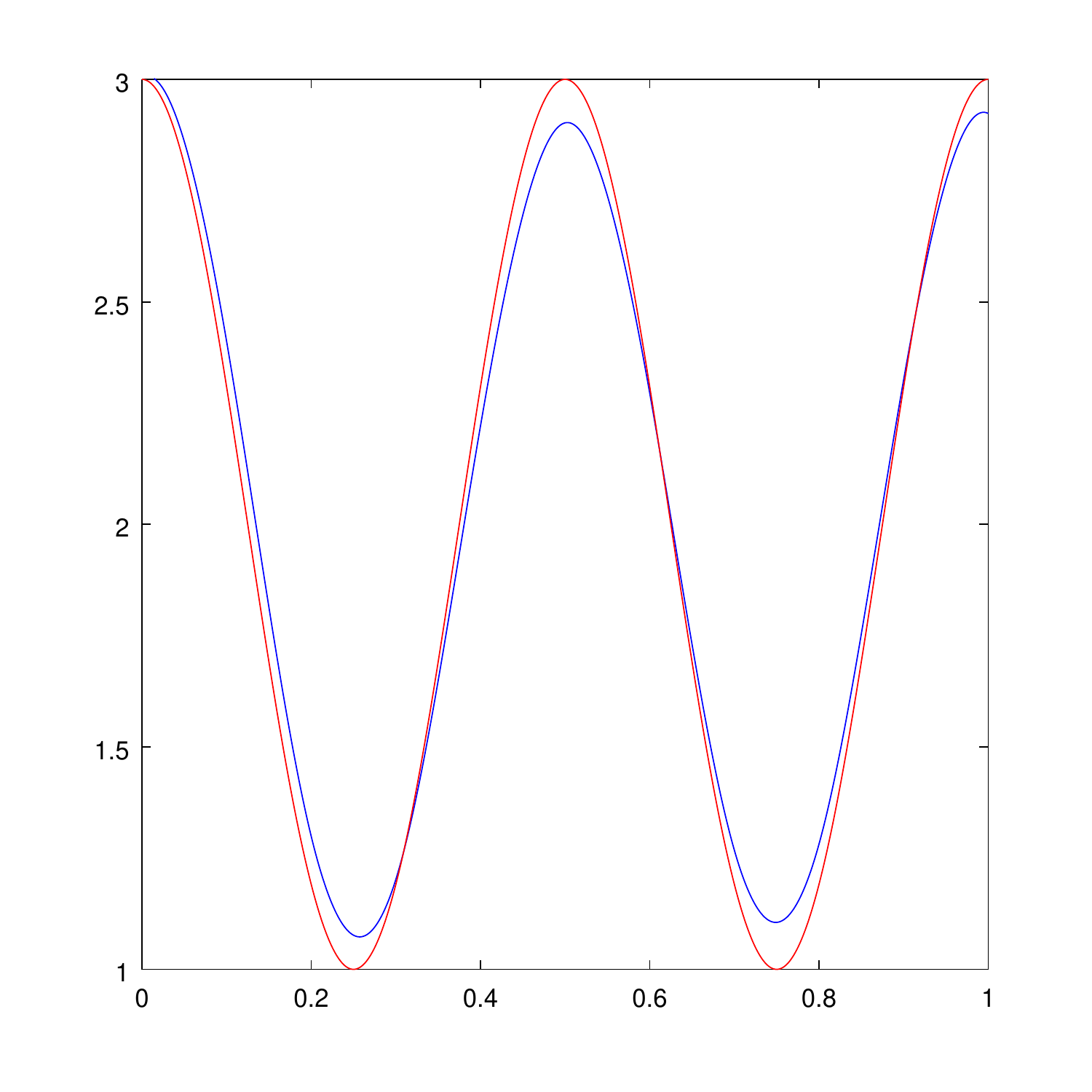}
  \label{fig:sigma-positive-delta0025}
\end{figure}

\begin{table}[h]
\caption{computation times for function~$\xnull$ from~(\ref{eq:f2})}
\begin{tabular}{r r r r}\toprule
 data error&approximation space&$m$& cpu-time [s]\\\addlinespace\midrule
 $\delta = 0.0025$&piece-wise constant&$400$&$1.60$\\
 &cubic splines&$10$&$1003$\\\midrule
 $\delta = 0.04$&piece-wise constant&$25$&$0.176$\\
 &cubic splines&$5$&$319$\\\bottomrule
\end{tabular}
\end{table}

\begin{figure}[h]
 \caption{$-\ln(\|x_\alpha^\delta-\xnull\|)$ versus $-\ln(\delta)$ for
   piece-wise constant (red),  smoothed piece-wise constant (blue),  and
   cubic splines (black) ansatz} 
 \centering
  \includegraphics[width=\textwidth,height=3in]{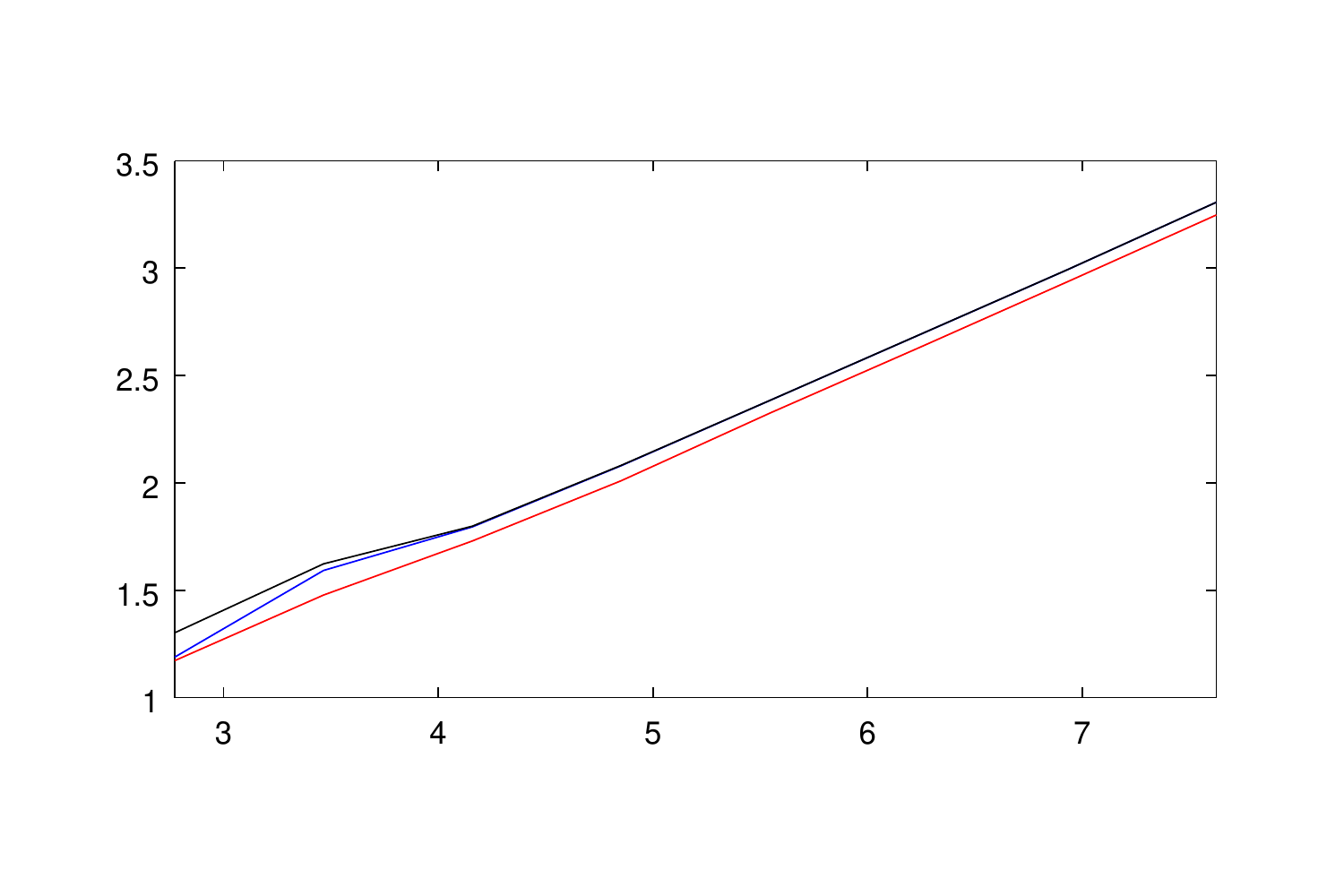}
 \label{fig:rate2}
\end{figure}

In principle we observe the same behavior as for the first test function.
The numerical convergence rates shown in figures \ref{fig:rate2} are
slightly lower than $\frac12$. 
A reason could be that the constant $c$ in the
parameter choice rule $\alpha = c\sqrt\delta$ has been chosen to small.

\clearpage
\appendix
\section{Proofs}

\begin{proof}[Proof of Lemma~\ref{lem:02sigma}]
First, notice that the mappings~$\qms$ are idempotent, and hence
projections onto their ranges.
Further we have the relation between the weighted and unweighted
spaces
 \begin{equation*}
  \|x\|_\sigma = \|D_\sigma\inv x\|,\qquad x\in L_2(0,1).
 \end{equation*}
This yields 
$$
\norm{\qms x}{\sigma}=\norm{D_{\sigma}^{-1}\qms x}{}  
= \norm{Q_{m}   D_{\sigma}^{-1}x}{}
= \norm{D_{\sigma}^{-1}x}{} = \norm{x}{\sigma},\ {x\in D_\sigma(X_m)},
$$
such that the mappings~$\qms$ are partial isometries, and hence
orthogonal projections on their ranges.

 To show the approximation property let $x\in H_1(0,1)$ be arbitrary.
 Then $f_\sigma\inv x \in H_1(0,1)$ because $f_\sigma\inv$ is a smooth
 function. Since $\qm$ satisfies assumption \ref{ass:onQ} we obtain 
 \begin{equation*}
  \|f_\sigma\inv x - \qm(f_\sigma\inv x) \| \le \frac L m
  \|f_\sigma\inv x\|_{H_1}. 
 \end{equation*}
 Moreover we have
 \begin{align*}
   \|f_\sigma\inv x\|_{H_1}^{2} & = \| f_\sigma\inv x \|^{2} + \| (f_\sigma\inv x)' \|^{2}\\
  &= \| f_\sigma\inv x \|^{2} + \| -\sigma f_\sigma\inv x + f_\sigma\inv x'\|^{2}\\
  & \le (1+2\sigma^2)\| f_\sigma\inv x \|^2 + 2\| f_\sigma\inv x'\|^2\\
  & = (1+2\sigma^2) \| x \|_\sigma^2 + 2\| x' \|_\sigma^2\\
  & \le \max\left\{2,1+2\sigma^2\right\} \|x\|_{H_1^\sigma}^{2}. 
 \end{align*}
 Using the above two estimates we derive
 \begin{align*}
  \| x - \qms x\|_\sigma & = \| D_\sigma\inv x - D_\sigma\inv D_\sigma \qm D_\sigma\inv x\|\\
  & = \| f_\sigma\inv x - \qm (f_\sigma\inv x)\|\\
  & \le \frac Lm \|f_\sigma\inv x\|_{H_1}\\
  & \le \max\left\{\sqrt2,\sqrt{1+2\sigma^2}\right\} \frac Lm \|x\|_{H_1^\sigma}.
 \end{align*}
Since we find that~$ \max\left\{\sqrt2,\sqrt{1+2\sigma^2}\right\} \leq
\sqrt 2 \lr{1 + \sigma}$ we can complete the proof.
\end{proof}

\subsection{Proofs for \S~\ref{sec:approx-init}}

Before turning to the proof of Proposition~\ref{pro:x0strich-appr} we
provide representations for $y_{0}$ and its derivatives. Indeed,
since it holds~$y_{0}(0)=0$, we have , from the Taylor expansion that
\begin{align}
y_0(s) & = y_0(0) + y_0'(0)s + \frac 12 y_0''(\xi)s^2 = y_0'(0)s +
         \frac 12 y_0''(\xi)s^2\,\text{ for some $\xi\in [0,s]$}
\label{taylor}\\
 y_0'(s) & = \xnull(0) \xnull(s) + \int_0^s \xnull'(s-t)\xnull(t)\de t = \xnull(0) \xnull(s) +\int_0^s \xnull(s-t) \xnull'(t) \de t\notag,\\
 y_0''(s) & = 2 \xnull(0) \xnull'(s) + \int_0^s \xnull'(s-t) \xnull'(t) \de t\notag.
\end{align}
This shows, among other things that~$y^{\prime\prime}$ is continuous,
but we also conclude that
\begin{equation}
  \label{eq:yprime-bounds}
 \norm{y_{0}^{\prime}}{\infty}\leq 2
\norm{\xnull}{C^{1}(0,1)}^{2}
\quad \text{and}\quad  \norm{y_{0}^{\prime\prime}}{\infty}\leq 3
\norm{\xnull}{C^{1}(0,1)}^{2}.
\end{equation}

\begin{proof}[Proof of Proposition~\ref{pro:x0strich-appr}]
We first derive a uniform bound. We shall use the piece-wise constant
approximations from Example~\ref{ex:fsigma-constant}. For $t\in\Delta$
we then find that
$$
\abs{(Q_{m}y_0 - y_0)(t)} = m \abs{\int_{\Delta}(y_0(\tau) - y_0(t)\de \tau},
$$
which yields the bound
$$
\abs{(Q_{m}y_0 - y_0)(t)} \leq \frac 1 {m} \norm{y_0^{\prime}}{\infty},
$$
by using~$\abs{y_0(\tau) - y_0(t)} = \abs{y_0^{\prime}(\xi) (\tau - t)}\leq
\abs{\Delta} \norm{y_0^{\prime}}{\infty}$. Thus we have 
\begin{equation}
  \label{eq:10}
  \norm{Q_{m} y_0 - y_0 }{\infty}\leq \frac 1 {m}
  \norm{y_0^{\prime}}{\infty} \leq  \frac 2 {m}
  \norm{x_0}{C^{1}(0,1)}^{2},
\end{equation}
by virtue of~(\ref{eq:yprime-bounds}).
Let now $h>0$. We start with~(\ref{taylor}) (s:=h), which yields for
the derivative of $y_{0}$ at zero 
\[
y_0'(0) = \frac{y_0(h) }h - \frac12 y_0''(\xi)h.
\]
Using~(\ref{noise1}), (\ref{eq:yprime-bounds}) and~(\ref{eq:10}) we see that
\begin{align*}
  \left|\frac{[Q_m\yd](h)}h - y_0'(0)\right|& \le \frac{|[Q \yd](h)-[Q
                                              y_0](h)|}h + \frac{|[Q
                                              y_0](h) - y_0(h)|}h\\
& + \frac12 h |y_{0}^{\prime\prime}(\xi)| \le \frac{\delta + \tfrac 2m {\norm{x_0}{C^{1}(0,1)}^{2}}}h + \frac32 \norm{x_0}{C^{1}(0,1)}^{2} h,
\end{align*}
If we set $h={\sqrt \delta}$ this yields
\begin{align*}
\left|\frac1{\sqrt \delta}{[Q_m\yd](\sqrt\delta}) - y_0'(0)\right| 
&\le \lr{1 + \frac {2\norm{x_0}{C^{1}(0,1)}^{2}} {m\delta} + \frac32 \norm{x_0}{C^{1}(0,1)}^{2} }\sqrt\delta\\
&\leq\left(1 + \frac 7 2 \norm{x_0}{C^{1}(0,1)}^{2}\right)\sqrt\delta,
\end{align*}
provided that $m\delta\geq 1$.
Since for $a,b>0$ it holds true that~$\abs{\sqrt a - b}\leq
\tfrac{\abs{a - b^{2}}}{b}$, whereas for $a<0$ we have $b \le \frac {\abs{a-b^2}}b$, we can complete the proof with
letting
\[
a:=
\begin{cases} 
\sqrt{\frac{1}{\sqrt \delta}[Q_m\yd](\sqrt\delta)},& \text{ if }
[Q_m\yd](\sqrt\delta)\ge 0, \\ 
0 ,& \text{else}.
\end{cases}
\]
 and~$b:= \xnull(0)$, such that
$b^{2}= y_0^{\prime}(0)$, cf. the above representations for $y_{0}$ and
its derivatives. 
\end{proof}

\begin{proof}
  [Proof of Proposition~\ref{pro:different-noise}]
Due to (\ref{taylor}) we obtain
\[|y_0(t) - y_0'(0)t| \le \frac 12 y_0(\xi) t^2\qquad\text{for some }\xi\in[0,t].\]
Using (\ref{eq:yprime-bounds}) this gives
\[|y_0(t) - y_0'(0)t| \le \frac 32\|x_0\|^2_{C^1(0,1)} t^2.\]
Integration from $0$ to $h$ yields
\[\left|\int_0^h y_0(t)\de t - \frac12 h^2 y_0'(0)\right| \le \frac12 h^3 \|x_0\|^2_{C^1(0,1)}.\]
Moreover, by the H\"older inequality we have
\begin{align*}
\left|\int_0^h (y_0(t)-\yd(t))\de t \right| &\le \int_0^h |y_0(t)-\yd(t)|\de t\\ 
&\le \sqrt{\int_0^h(y_0(t) - \yd(t))^2 \de t} \cdot \sqrt{\int_0^h\de t} \le \delta \sqrt h.
\end{align*}
The triangle inequality yields
\[\left|\int_0^h \yd(t)\de t - \frac12 h^2 y_0'(0)\right| \le \frac12 h^3 \|x_0\|^2_{C^1(0,1)} + \delta\sqrt{ h}\]
or equivalently
\[\left|\frac2{h^2}\int_0^h \yd(t)\de t - y_0'(0)\right| \le h \|x_0\|^2_{C^1(0,1)} + \delta h^{-\frac32}. \]
Considered as a function of $h$, the right hand side of this inequality
obtains its minimum for $h:=\left(\frac23
  \|x_0\|^2_{C^1(0,1)}\right)^{-\frac25}\delta^{\frac25}$, and then
the assertion is proved.
\end{proof}

\subsection{Proofs for \S~\ref{sec:main-result}}

\begin{proof}[Proof of Proposition~\ref{pro:contraction}]
  We recall the definition of the mapping~$G$
  from~(\ref{eq:G-def}).
We shall first see that there is some $r>0$ such that $\norm{Qx -
  \xnull}{\sigma}\leq r$ yields that~$\norm{G(Qx) -
  \xnull}{\sigma}\leq r$.
By the definition of $G$ we see that
$$
\norm{G(Qx) - \xnull}{\sigma} \leq I_{1} + I_{2} + I_{3} + I_{4}, 
$$
where
\begin{align*}
  I_{1} & := \|H_\alpha\inv Q  F'(\xnull)(\xnull-Q \xnull)\|_\sigma,\\
  I_{2} & := \|H_\alpha\inv Q ( \yd - y )\|_\sigma,\\
  I_{3} & := \|H_\alpha\inv Q F(Q x - \xnull)\|_\sigma,\quad\text{and}\\
  I_{4} &:= \|H_\alpha\inv \alpha (Q x_* -\xnull)\|_\sigma.
\end{align*}
We bound each summand, separately.
By using Lemma~\ref{lem:frechet} and Assumption~\ref{ass:onQ} we find that
\begin{align*}
  I_{1} &\leq
  \norm{H_{\alpha}^{-1}}{\sigma}\norm{F^{\prime}(\xnull)}{\sigma}
  \norm{Q \xnull - \xnull}{\sigma}\leq%
  \frac{2L}{m\alpha}\norm{\xnull}{\sigma}\norm{\xnull}{H_{1}^{\sigma}(0,1)}\\
  &\leq
  \frac{2L}{m\alpha}\norm{\xnull}{H_{1}^{\sigma}(0,1)}^{2}\norm{Q}{\sigma
  \to \sigma}.
\end{align*}
where throughout we use~$\norm{Q}{\sigma  \to \sigma}=1$, since
$Q:L_2^\sigma(0,1)\to L_2^\sigma(0,1)$ is an orthogonal projection by
Assumption~\ref{ass:onQ}. 
By Assumption~\ref{ass:noise}(\ref{noise1}) on the noise we bound 
$$
I_{2} \leq \|H_\alpha\inv\|_\sigma \, \| Q ( \yd - y ) \|_\sigma \le
\frac \delta \alpha.
$$
Next, we see from Lemma~\ref{lem:frechet} that
$$
I_{3} \leq  \|H_\alpha\inv\|_\sigma \, \|F(Q x - \xnull)\|_\sigma \le \frac{\|Q x-\xnull\|_\sigma^2}\alpha.
$$
The bound for~$I_{4}$ is more tedious, and we decompose
\begin{align*}
  I_{4} &\leq \alpha \|H_\alpha\inv(Q ( \xnull(0) -\xnull) + Q(x_* -
          \xnull(0)) +(Q \xnull -\xnull)\|_\sigma\\
 &\leq \alpha \|H_\alpha\inv(Q ( \xnull(0)-\xnull))\|_\sigma + \alpha
   \|H_\alpha\inv Q(x_* -  \xnull(0))\|_\sigma + \alpha \|H_\alpha\inv(Q
   \xnull -\xnull)\|_\sigma.
\end{align*}
Again, we bound separately. The last summand is easily bounded
as
$$
\alpha \|H_\alpha\inv(Q \xnull -\xnull)\|_{\sigma} \leq \frac L m
\norm{\xnull}{H_{1}^{\sigma}} .
$$
For the middle summand we recall that the reference element~$x_{\ast}$
was chosen constant, cf.~(\ref{eq:xast}), such that by
Proposition~\ref{pro:x0strich-appr} we find
$$
 \alpha \|H_\alpha\inv Q(x_* -  \xnull(0))\|_{\sigma}\leq \|Q(x_* -
 \xnull(0))\|_{\sigma}
\leq  \frac{\left(1 +
    {4\norm{\xnull}{C^{1}(0,1)}^{2}}\right)}{\xnull(0)}\sqrt\delta.
$$
It remains to bound the first summand in~$I_{4}$, above. To this end
we use the element~$\omega$ from~(\ref{eq:volterra}) and find that
$$
 \alpha \|H_\alpha\inv(Q ( \xnull(0)-\xnull))\|_\sigma
\leq  \alpha \norm{H_{\alpha}^{-1} Q F^{\prime}(\xnull) \omega}{\sigma}.
$$
We bound the right hand side, again using Lemma~\ref{lem:frechet}, as
\begin{align*}
 \alpha \norm{H_{\alpha}^{-1} Q F^{\prime}(\xnull) \omega}{\sigma} 
& \leq  \alpha \norm{H_{\alpha}^{-1} Q F^{\prime}(\xnull) Q
  \omega}{\sigma} 
+  \alpha \norm{H_{\alpha}^{-1} Q F^{\prime}(\xnull) (\Id - Q)
  \omega}{\sigma}\\
& \leq \alpha \norm{\omega}{\sigma} + 2\norm{Q}{\sigma\to
  \sigma}\norm{\xnull}{\sigma} \norm{(\Id - Q)
  \omega}{\sigma}. 
\end{align*}
By Assumption~\ref{ass:onx} the right hand side in~(\ref{eq:volterra})
is in~$W_{\infty}^{2}(0,1)$, its derivative is in~$H_{1}^{\sigma}$,
and so will be the element~$\omega$, where we refer to~\cite[Proof of
Lem.~4]{MR1766765}. Therefore Assumption~\ref{ass:onQ} gives
$$
 \alpha \norm{H_{\alpha}^{-1} Q F^{\prime}(\xnull) \omega}{\sigma} 
\leq  \alpha\lr{ \norm{\omega}{\sigma} + \frac{2L}{m\alpha}\norm{Q}{\sigma\to
  \sigma}\norm{\xnull}{\sigma} \norm{\omega}{H_{1}^{\sigma}(0,1)}}.
$$
Overall this gives for $I_{4}$ the bound
\begin{align*}
  I_{4} & \leq \alpha \norm{\omega}{\sigma} + \\
&\frac{2L}{m}\norm{\xnull}{\sigma} \norm{\omega}{H_{1}^{\sigma}(0,1)} + \frac L m
\norm{\xnull}{H_{1}^{\sigma}} + \frac{\left(1 +
    {4\norm{\xnull}{C^{1}(0,1)}^{2}}\right)}{\xnull(0)}\sqrt\delta.
\end{align*}
We rearrange terms and write
\begin{equation}
  \label{eq:Gx-x}
\norm{G(Qx) - \xnull}{\sigma} \leq I_{3} + \lr{I_{1} + I_{2} + I_{4}}.  
\end{equation}
The following result proves useful.
\begin{lem}
  \label{lem:quadrat}
Suppose that $a,b,u,v\geq 0$ are such that~$u \leq a v^{2} +b$. If
$4ab<1$ then 
$v\leq r$ implies that~$u\leq r$
for the choice of
 \begin{equation}
    \label{eq:r}
    r= \frac 1 {2a}\lr{1 - \sqrt{1 - 4ab}}.
  \end{equation}
\end{lem}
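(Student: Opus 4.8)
The plan is to read the hypothesis as a self-mapping property of the quadratic $\phi(t):=a t^{2}+b$ on the interval $[0,r]$. First I would record the elementary but decisive observation that, since $a\ge 0$, the function $\phi$ is nondecreasing on $[0,\infty)$ (its derivative $2at$ is nonnegative there). Consequently, to deduce $u\le r$ from $v\le r$ it suffices to exhibit a single $r\ge 0$ with the property $\phi(r)\le r$: then $v\le r$ yields $u\le\phi(v)\le\phi(r)\le r$, where monotonicity is used in the middle step. This reduces the whole claim to solving one scalar inequality.

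That inequality is $a r^{2}-r+b\le 0$, so I would analyze the quadratic $p(t):=a t^{2}-t+b$. Assuming $a>0$ (the degenerate case $a=0$ is trivial, as then $u\le b$ and the formula is read in the limiting sense $r=b$), the hypothesis $4ab<1$ gives a strictly positive discriminant $1-4ab>0$, whence $p$ has the two real roots
\begin{equation*}
 r_{\pm}=\frac{1\pm\sqrt{1-4ab}}{2a}.
\end{equation*}
Because the parabola opens upward, $p$ is nonpositive exactly on $[r_{-},r_{+}]$ and vanishes at the endpoints. I would then set $r:=r_{-}$, which is precisely the value appearing in~(\ref{eq:r}), and check $r\ge 0$: since $4ab\ge 0$ forces $\sqrt{1-4ab}\le 1$, we get $1-\sqrt{1-4ab}\ge 0$, and $2a>0$.

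At this chosen value we in fact have equality $p(r)=0$, that is $\phi(r)=ar^{2}+b=r$, so $r$ is a fixed point of $\phi$ rather than merely a point satisfying $\phi(r)\le r$. Inserting this into the reduction of the first paragraph closes the argument: for every admissible $v\le r$ one has $u\le a v^{2}+b=\phi(v)\le\phi(r)=r$. I do not anticipate any genuine obstacle, as the statement is purely algebraic; the only points demanding care are the monotonicity reduction, which is what makes the one-point test $\phi(r)\le r$ sufficient, and the selection of the \emph{smaller} root $r_{-}$. The larger root $r_{+}$ is also a fixed point of $\phi$ and would verify the same implication, but it gives a weaker, non-optimal radius, so choosing $r_{-}$ is what makes the bound usable as a tight invariant ball in the contraction argument of Proposition~\ref{pro:contraction}.
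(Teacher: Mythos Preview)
Your proof is correct and follows essentially the same approach as the paper's own proof: reduce the implication to the single scalar inequality $ar^{2}-r+b\le 0$, then observe that this quadratic has real roots precisely when $4ab<1$ and that the smaller root is the stated~$r$. The paper's proof is three terse lines; you have merely made explicit the monotonicity step (which the paper leaves implicit), the positivity check for~$r$, the degenerate case $a=0$, and the rationale for taking the smaller root.
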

\begin{proof}
  The assertion holds true if $r$ can be found such that $a r^{2} - r
  + b\leq 0$, and this is the case whenever~$4ab<1$. In this case the
  choice of~$r$ according to~(\ref{eq:r}) does the job.
\end{proof}
We shall apply this fact to the estimate~(\ref{eq:Gx-x}), hence
with~$u:=\norm{G(Qx) - \xnull}{\sigma},  v:= \|Q
x-\xnull\|_{\sigma}, a:= \alpha^{-1}\norm{Q}{\sigma  \to \sigma},
b:= I_{1} + I_{2} + I_{4}$. 
Thus, we aim at arranging the parameter~$\alpha>0$ such that
\begin{align*}
4 \alpha^{-1}\norm{Q}{\sigma  \to \sigma}\lr{I_{1} + I_{2} +
  I_{4}}
& = 4 \norm{Q}{\sigma  \to \sigma}  \norm{\omega}{\sigma}\\
& + 4 \alpha^{-1}\norm{Q}{\sigma  \to \sigma}\lr{I_{1} + I_{2} +
  I_{4} - \alpha  \norm{\omega}{\sigma}}< 1.  
\end{align*}
The first summand is smaller than one under the assumption on~$
\norm{\omega}{\sigma}$,  (note that $\|Q\|_{\sigma\to \sigma}=1$ here), and hence we need to make the second summand,
temporarily denoted by $\res$, 
sufficiently small for an appropriate choice of~$\alpha$. 
Looking at the bounds for~$I_{1},I_{2}$ and $I_{4}$ we find
constants~$C_{1},\dots,C_{5}>0$ such that
$$
\res \leq C_{1}\frac{1}{m\alpha^{2}} + C_{2}
\frac{\delta}{\alpha^{2}} + C_{3} \frac{1}{m\alpha} + C_{4}\frac{\sqrt
\delta}{\alpha} + C_{5}\frac1 {m\alpha}.
$$ 
Now we recall that the bounds were obtained under the assumption
that~$m\delta\geq1$ (cf. Proposition~\ref{pro:x0strich-appr}), and assuming
that the noise level is small, hence assuming that~$\delta\leq 1$,
then we can specify the previous bound to the form
$$
\res \leq \lr{C_{1} + C_{2}}\frac{\delta}{\alpha^{2}} +
\lr{C_{3} + C_{4} + C_{5}}\sqrt{\frac{\delta}{\alpha^{2}}}.
$$
Thus, the remainder~$\res$ can be made arbitrarily small if
$\delta/\alpha^{2}$ is sufficiently small. This can be achieved by
letting~$\alpha := c \sqrt{\delta}$ with a sufficiently large constant~$c>0$.

Under the assumptions made, and in the light of
Lemma~\ref{lem:quadrat} we can find $r>0$ such that
$$
G(B(\xnull,r)\cap \mathcal R Q)\subset B(\xnull,r)\cap \mathcal R Q.
$$
Notice, that the set~$B(\xnull,r)\cap \mathcal R Q$ is compact and
convex, such that the Schauder Fixed Point Theorem,
cf.~\cite[Chapt.~V.9]{MR1070713},  yields the existence
of a fixed point of the continuous mapping~$G$.
\end{proof}
\begin{proof}
  [Proof of Theorem~\ref{thm:main}]
Under the given assumptions, by Proposition~\ref{pro:contraction} the
equation~(\ref{eq}) has a solution, say~$\xad$ in $X_{m}$ which
satisfies~$\xad\in B(\xnull,r)$, where~$r$ is given by
Lemma~\ref{lem:quadrat}, with $a,b$ specified, there.
Obviously, we can bound~$r \leq 1/(2a)$. The specification for~$a$ was given as~$a:=
\norm{Q_{m}}{\sigma\to\sigma}/\alpha$, and it yields that~$r\leq \alpha/(2
\norm{Q_{m}}{\sigma\to\sigma}) \leq \alpha/2$, such that with~$\alpha
= c\sqrt\delta$ (cf. proof of Proposition~\ref{pro:contraction}), we find
$$
r \leq \alpha/2 = \frac c 2 \sqrt\delta.
$$
Notice, that the value of~$c$ depends on the constants~$C_{1},\dots,
C_{5}$, only, and these were dependent on properties of~$\xnull$, but
not on the noise level~$\delta$.
which completes the proof.
\end{proof}

\subsection{Proofs for \S~\ref{sec:extension}}

\begin{proof}[Proof of Lemma~\ref{lem:qm-qms}]
  Let $x\in L_2(0,1)$ and $s\in[0,1)$.
Then there exists $i\in \bbN$, s.t. $s\in[\frac{i-1}m,\frac im)$. Using $\frac\sigma m \le 1$ and the inequality
\begin{equation*}
 |e^x-1| \le 2 |x| \qquad \text{ for } |x| \le 1
\end{equation*}
we estimate
\begin{align*}
 \big|[(Q_m - Q_m^\delta)x](s)\big| & =  m \left| \int_{\frac{i-1}m}^{\frac im} x(t) \de t - e^{\sigma s} \int_{\frac{i-1}m}^{\frac im} e^{-\sigma t} x(t) \de t \right|\\
 & = m e^{\sigma s} \left| \int_{\frac{i-1}m}^{\frac im} (e^{\sigma(t-s)} -1) e^{-\sigma t} x(t) \de t \right|\\
 & \le m e^{\sigma s}  \int_{\frac{i-1}m}^{\frac im} \left|e^{\sigma(t-s)} -1\right| \cdot \left| e^{-\sigma t} x(t) \right| \de t\\
 & \le m e^{\sigma s}  \int_{\frac{i-1}m}^{\frac im} 2\left|\sigma(t-s)\right| \cdot \left| e^{-\sigma t} x(t) \right| \de t\\
 & \le m e^{\sigma s}  \int_{\frac{i-1}m}^{\frac im} 2\frac \sigma m \left| e^{-\sigma t} x(t) \right| \de t\\
 & \le 2 \sigma e^{\sigma s} \left( \int_{\frac{i-1}m}^{\frac im} e^{-2\sigma t} x(t)^2 \de t \right)^{\frac12} \sqrt{\frac 1m},
\end{align*}
where we used the H\"older inequality in the last step.
Now we have
\begin{align*}
 \|(Q_m - Q_m^\delta)x\|_\sigma^2 &= \sum_{i=1}^m \int_{\frac{i-1}m}^{\frac im} e^{-2\sigma s} \big|[(Q_m-Q_m^\sigma)x](s)\big|^2 \de s\\
 &\le  \sum_{i=1}^m \int_{\frac{i-1}m}^{\frac im}  \frac{4\sigma^2}{ m} \left( \int_{\frac{i-1}m}^{\frac im} e^{-2\sigma t} x(t)^2 \de t \right) \de s\\
 &= \frac{4\sigma^2}{ m^2 } \int_0^1 e^{-2\sigma t} x(t)^2 \de t
= \frac{4\sigma^2}{ m^2 } \|x\|_\sigma^2
\end{align*}
and thus
\begin{equation*}
 \|Q_m - Q_m^\sigma\|_{\sigma\to\sigma} \le \frac{2\sigma}m,
\end{equation*}
which proves the first assertion. Finally, in operator
norms we bound
\begin{align*}
 \norm{Q_{m}}{\sigma\to\sigma} \leq \norm{Q_{m} - \qms }{\sigma\to\sigma} + \norm{\qms }{\sigma\to \sigma}  \leq  \frac{2\sigma}m + 1,
\end{align*}
which completes the proof of the lemma.
\end{proof}

\begin{proof}[Proof of Corollary~\ref{cor:accretive}]
 Let $v\in L_2(0,1)$ be arbitrary. Then
 \begin{align*}
  &\spsig{\left(Q_m F'(x) Q_m  + \tfrac{8\sigma\|x\|_\sigma}m \Id\right) v}{v}\\
  &= \spsig{\qms F'(x) \qms v}{v} + \spsig{Q_m F'(x) Q_m v}{v} - \spsig{\qms F'(x) \qms v}{v} + \frac{8\sigma\|x\|_\sigma}m \|v\|_\sigma^2\\
  & \ge \spsig{(Q_m - \qms) F'(x) Q_m v}v + \spsig{\qms F'(x) (Q_m - \qms) v}v  + \frac{8\sigma\|x\|_\sigma}m \|v\|_\sigma^2\\
  & \ge \frac{8\sigma\|x\|_\sigma}m \|v\|_\sigma^2 - \|(Q_m - \qms) F'(x) Q_m v\|_\sigma\cdot\|v\|_\sigma - \|\qms F'(x) (Q_m - \qms) v\|_\sigma\cdot\|v\|_\sigma\\
  & \ge \frac{8\sigma\|x\|_\sigma}m \|v\|_\sigma^2 - 2 \|Q_m - \qms\|_{\sigma\to\sigma}\cdot \|F'(x)\|_{\sigma\to\sigma} \cdot \|v\|_\sigma^2.
 \end{align*}
Now we use Lemma~\ref{lem:frechet} and Lemma~\ref{lem:qm-qms} to see that
$$
2 \|Q_m - \qms\|_{\sigma\to\sigma}\cdot \|F'(x)\|_{\sigma\to\sigma} \cdot \|v\|_\sigma^2
\leq \frac{8\sigma}m \norm{x}{\sigma}\norm{v}{\sigma}^{2},
$$
 which implies the accretivity of $Q_m F'(x) Q_m + \frac{8\sigma\|x\|}m \Id$. 
 For the sake of readability we set $J:= Q_m F'(x) Q_m$. Let now $\alpha \ge \frac{16\sigma\|x\|}m$.
 Then $\frac\alpha2 \ge \frac{8\sigma\|x\|}m$, thus $G := J +
 \frac\alpha 2 \Id$ is accretive. We
 recall~\cite[Eq.~(14)]{MR1766765}, 
 which asserts that for $\beta>0$ we have
 \begin{equation*}
  \|(\beta\Id + G)\inv\|_\sigma \le \frac1\beta \quad \text{and}\quad
  \|(\beta \Id + G)\inv G\|_\sigma \le 1. 
 \end{equation*}
 We use this to conclude that
 \begin{equation*}
  \|(\alpha\Id + J)\inv\|_\sigma = \| \big(\tfrac\alpha 2 \Id + (\tfrac\alpha2\Id + J)\big)\inv\|_\sigma  \le \frac2{\alpha},
 \end{equation*}
 and
 \begin{align*}
\|
&(\alpha\Id+ J)\inv J\|_\sigma = \|\big(\tfrac\alpha2\Id + (\tfrac\alpha2\Id + J)\big)\inv (\tfrac\alpha 2 \Id + J - \tfrac\alpha2 \Id)\|_\sigma\\
& \le \|\big(\tfrac\alpha2\Id + (\tfrac\alpha2\Id + J)\big)\inv (\tfrac\alpha 2 \Id + J) \|_\sigma + \|\big(\tfrac\alpha2\Id + (\tfrac\alpha2\Id + J)\big)\inv \tfrac\alpha2 \Id\|_\sigma\\
  & \le 1 + \frac2\alpha \cdot \frac\alpha2 = 2,
 \end{align*}
and the proof is complete.
\end{proof}

\begin{proof}[Sketch of the proof of Proposition \ref{pro:main}]
 By Lemma \ref{lem:suumary-sigma}, it exists a $\sigma_1>0$ s.t.
 \begin{equation*}
 \|\omega\|_{\sigma_1} < \frac 1{48}.
 \end{equation*}
Since  by Lemma \ref{lem:qm-qms} we have with $m \ge {\sigma_1}$ that
 \begin{equation*}
  \|Q_m\|_{\sigma_1\to \sigma_1}\le 1 + 2 \frac {\sigma_1} m \le 3
 \end{equation*}
and hence we find that
 \begin{equation*}
  \|\omega\|_{\sigma_1}\cdot \|Q_m\|_{\sigma_1\to \sigma_1}< \frac 1{16}.
 \end{equation*}
 In the following we will abbreviate~$Q_{m}$ by~$Q$.
 Basically we follow the proofs of Proposition \ref{pro:contraction}
 and Theorem \ref{thm:main}, but the norm~$\norm{Q}{\sigma\to\sigma}$
 is no longer equal to one.
 Analogous to (\ref{eq:G-def}) we write equation (\ref{eq:reg0}) in fix-point form as
 \begin{equation*}
  G(Qx) = Qx
 \end{equation*}
 with
 \begin{equation*}
  G(Q x) := \xnull + H_\alpha\inv \Big( Q \big( F'(\xnull)(\xnull-Q \xnull) + \yd - y- F(Q_mx-\xnull )\big) + \alpha (Q  x_*-\xnull) \Big),
 \end{equation*}
 where
 \begin{equation*}
  H_\alpha:= Q F'(x_0)Q.
 \end{equation*}
 As in the proof of Prop. \ref{pro:contraction} we have
 $$
\norm{G(Qx) - \xnull}{\sigma_1} \leq I_{1} + I_{2} + I_{3} + I_{4}, 
$$
where
\begin{align*}
  I_{1} & := \|H_\alpha\inv Q  F'(\xnull)(\xnull-Q \xnull)\|_{\sigma_1},\\
  I_{2} & := \|H_\alpha\inv Q ( \yd - y )\|_{\sigma_1},\\
  I_{3} & := \|H_\alpha\inv Q F(Q x - \xnull)\|_{\sigma_1}\quad\text{and}\\
  I_{4} &:= \|H_\alpha\inv \alpha (Q x_* -\xnull)\|_{\sigma_1}.
\end{align*}
The following estimations are almost the same as before, except that we
assume
\begin{equation}\label{eq:ass:alpha}
\alpha\ge \frac{16\sigma\|x\|}m
\end{equation}
and make use of Corollary \ref{cor:accretive}.
This yields
\begin{align*}
  I_{1} &\leq
  \frac{4L}{m\alpha}\norm{\xnull}{H_{1}^{\sigma_1}(0,1)}^{2}\norm{Q}{\sigma_1
  \to \sigma_1},\\
I_{2} &\leq  2\frac \delta \alpha\norm{Q}{\sigma_1
  \to \sigma_1},\\
I_{3} &\leq  2\frac{\|Q x-\xnull\|_{\sigma_1}^2}\alpha\norm{Q}{\sigma_1
  \to \sigma_1}
\end{align*}
and
\begin{align*}
  I_{4} & \leq 2\alpha \norm{\omega}{\sigma_1} + \frac{4L}{m}\norm{Q}{\sigma_1\to
  \sigma_1}\norm{\xnull}{\sigma_1} \norm{\omega}{H_{1}^{\sigma_1}(0,1)}\\
  &+ \frac {2L} m \norm{\xnull}{H_{1}^{\sigma_1}} + \frac{2\left(1 +
    {4\norm{\xnull}{C^{1}(0,1)}^{2}}\right)}{\xnull(0)}\sqrt\delta\norm{Q}{\sigma_1
  \to \sigma_1}.
\end{align*}
Now we want to apply Lemma \ref{lem:quadrat}
with the parameters $u:=\norm{G(Qx) - \xnull}{\sigma_1},  v:= \|Q
x-\xnull\|_{\sigma_1}, a:= 2\alpha^{-1}\norm{Q}{\sigma_1  \to \sigma_1},
b:= I_{1} + I_{2} + I_{4}$. 
Again we have to ensure that $4ab<1$. This is equivalent to
\begin{align*}
 16 \norm{Q}{\sigma_1  \to \sigma_1}  \norm{\omega}{\sigma_1} + 8 \alpha^{-1}\norm{Q}{\sigma_1 \to \sigma_1}\lr{I_{1} + I_{2} +
  I_{4} - 2\alpha  \norm{\omega}{\sigma_1}}< 1.  
\end{align*}
Due to the choice of $\sigma_1$ we have
\begin{equation*}
 16 \norm{Q}{\sigma_1  \to \sigma_1}  \norm{\omega}{\sigma_1}<1
\end{equation*}
and the second summand, denoted by $\res$ can be estimated as
$$
\res \leq C_{1}\frac{1}{m\alpha^{2}} + C_{2}
\frac{\delta}{\alpha^{2}} + C_{3} \frac{1}{m\alpha} + C_{4}\frac{\sqrt
\delta}{\alpha} + C_{5}\frac1 {m\alpha}.
$$ 
with constants $C_1,\dots,C_5$.
With the assumptions $m\delta \ge 1$ and $\delta \le 1$ we derive
$$
\res \leq \lr{C_{1} + C_{2}}\frac{\delta}{\alpha^{2}} +
\lr{C_{3} + C_{4} + C_{5}}\sqrt{\frac{\delta}{\alpha^{2}}}.
$$
Hence $\res$ can be made arbitrarily small by setting $\alpha=c\sqrt\delta$ with $c$ big enough and
$c\ge \frac{16\sigma\|x\|}{m\sqrt\delta}$, which ensures (\ref{eq:ass:alpha}). If $c$ is now chosen in such a way
that 
\begin{equation*}
 16 \norm{Q}{\sigma_1  \to \sigma_1}  \norm{\omega}{\sigma_1} + \res < 1
\end{equation*}
we apply Lemma \ref{lem:quadrat} and obtain
$$
G(B(\xnull,r)\cap \mathcal R Q)\subset B(\xnull,r)\cap \mathcal R Q.
$$
for $r= \frac 1 {2a}\lr{1 - \sqrt{1 - 4ab}}$. Since $B(\xnull,r)\cap \mathcal R Q$ is a
compact and convex set, we obtain that $G$ has a fixed point by the Schauder Fixed Point Theorem.
The remainder of the proof is analogous to the proof of Theorem \ref{thm:main}.
\end{proof}

\def\cprime{$'$}


\begin{thebibliography}{10}

\bibitem{0266-5611-32-3-035002}
Stephan~W Anzengruber, Steven Bürger, Bernd Hofmann, and Günter Steinmeyer.
\newblock Variational regularization of complex deautoconvolution and phase
  retrieval in ultrashort laser pulse characterization.
\newblock {\em Inverse Problems}, 32(3):035002, 2016.

\bibitem{MR3353600}
Steven B{\"u}rger and Jens Flemming.
\newblock Deautoconvolution: a new decomposition approach versus {TIGRA} and
  local regularization.
\newblock {\em J. Inverse Ill-Posed Probl.}, 23(3):231--243, 2015.

\bibitem{MR3306115}
Steven B{\"u}rger and Bernd Hofmann.
\newblock About a deficit in low-order convergence rates on the example of
  autoconvolution.
\newblock {\em Appl. Anal.}, 94(3):477--493, 2015.

\bibitem{MR1070713}
John~B. Conway.
\newblock {\em A course in functional analysis}, volume~96 of {\em Graduate
  Texts in Mathematics}.
\newblock Springer-Verlag, New York, second edition, 1990.

\bibitem{MR1402101}
Gunter Fleischer and Bernd Hofmann.
\newblock On inversion rates for the autoconvolution equation.
\newblock {\em Inverse Problems}, 12(4):419--435, 1996.

\bibitem{MR1269013}
Rudolf Gorenflo and Bernd Hofmann.
\newblock On autoconvolution and regularization.
\newblock {\em Inverse Problems}, 10(2):353--373, 1994.

\bibitem{MR1766765}
Jaan Janno.
\newblock Lavrent\cprime ev regularization of ill-posed problems containing
  nonlinear near-to-monotone operators with application to autoconvolution
  equation.
\newblock {\em Inverse Problems}, 16(2):333--348, 2000.

\bibitem{MR3043622}
Shuai Lu, Valeriya Naumova, and Sergei~V. Pereverzev.
\newblock Legendre polynomials as a recommended basis for numerical
  differentiation in the presence of stochastic white noise.
\newblock {\em J. Inverse Ill-Posed Probl.}, 21(2):193--216, 2013.

\bibitem{MR2240638}
Shuai Lu and Sergei~V. Pereverzev.
\newblock Numerical differentiation from a viewpoint of regularization theory.
\newblock {\em Math. Comp.}, 75(256):1853--1870, 2006.

\bibitem{Schleicher198333}
K.-Th. Schleicher, S.W. Schulz, R.~Gmeiner, and H.-U. Chun.
\newblock A computational method for the evaluation of highly resolved dos
  functions from {APS} measurements.
\newblock {\em Journal of Electron Spectroscopy and Related Phenomena},
  31(1):33 -- 56, 1983.

\bibitem{MR2348176}
Larry~L. Schumaker.
\newblock {\em Spline functions: basic theory}.
\newblock Cambridge Mathematical Library. Cambridge University Press,
  Cambridge, third edition, 2007.

\end{thebibliography}
\end{document}